\LetLtxMacro\amsproof\proof
\LetLtxMacro\amsendproof\endproof
\crefname{equation}{Equation}{Equations}
\crefname{figure}{Figure}{Figures}
\Crefname{figure}{Figure}{Figures}
\crefname{section}{Section}{Sections}
\crefname{remark}{Remark}{Remarks}
\crefname{Claim}{Claim}{Claims}
\crefname{Def}{Definition}{Definitions}
\crefname{Th}{Theorem}{Theorems}
\crefname{Cor}{Corollary}{Corollaries}
\crefname{Prop}{Proposition}{Propositions}
\crefname{Lmm}{Lemma}{Lemmas}
\crefname{Ex}{Example}{Examples}
\crefname{Exs}{Examples}{Examples}
\crefname{CEx}{Counter-example}{Counter-example}
\crefname{Q}{Question}{Questions}
\crefname{Rq}{Remark}{Remarks}
\setlist{nosep}
\definecolor{MCB}{cmyk}{0,0.03,0.08,0.26} 
\definecolor{MFCB}{cmyk}{0,0.06,0.20,0.6} 
\colorlet{Leturquoise}{DeepSkyBlue4}
\colorlet{TurquoiseClair}{DeepSkyBlue4!20}
\colorlet{MyOrange}{DarkOrange3!85}
\newtcolorbox{HypBox}{colframe=black, colback=white,
  boxrule=0.0pt,bottomrule=1pt,toprule=1pt,outer arc=0pt,arc=0pt,
  before skip=0.2cm, after skip=0.2cm,
  halign=left}
\newtcolorbox{IP}{colframe=black,colback=white, arc=0mm, bottomrule=0pt,
  toprule=0pt, rightrule=0pt, leftrule=5pt, halign=left}
\newtcolorbox{Todo}{colframe=DeepSkyBlue4,colback=DeepSkyBlue4!10,halign=left}
\newcommand{\deffont}[1]{\textbf{#1}}
\newcommand{\bref}[1]{\cref{#1} on \cpageref{#1}}
  \LetLtxMacro\proof\amsproof
\declaretheorem[
style=definition,
thmbox={style=S,underline=false,bodystyle=\normalfont \noindent,thickness=0.3pt},
name=Definition,
numberwithin=section,
refname={Definition,Definitions},
Refname={Definition,Definitions}]{Def}
\declaretheorem[
style=plain,
thmbox={style=S, underline=false, bodystyle=\normalfont \noindent},
name=Theorem,
sibling=Def,
refname={Theorem,Theorems},
Refname={Theorm,Theorems}]{Th}
\declaretheorem[
style=plain,
thmbox={style=S,underline=false, bodystyle=\normalfont \noindent},
name=Proposition,
sibling=Def,
refname={Proposition,Propositions},
Refname={Propositions,Propositions}]{Prop}
\declaretheorem[
style=plain,
thmbox={style=S,underline=false,bodystyle=\normalfont \noindent},
name=Lemma,
sibling=Def,
heading={Lemma},
refname={Lemma,Lemmas},
Refname={Lemma,Lemmas}]{Lmm}
\theoremstyle{definition}
\newtheorem{Ex}[Def]{Example}
\newtheorem{Q}[Def]{Question}
\newtheorem{Claim}[Def]{Claim}
\newtheorem{Rq}[Def]{Remark}
\numberwithin{equation}{section}
\newcommand{\mbfe}{\boldsymbol{e}}
\newcommand{\mbff}{\boldsymbol{f}}
\newcommand{\mbfg}{\boldsymbol{g}}
\newcommand{\mbfL}{\boldsymbol{L}}
\newcommand{\mbfp}{\boldsymbol{p}}
\newcommand{\bN}{\mathbb{N}}
\newcommand{\bZ}{\mathbb{Z}}
\newcommand{\bR}{\mathbb{R}}
\newcommand{\calC}{\mathcal{C}}
\newcommand{\calG}{\mathcal{G}}
\newcommand{\calH}{\mathcal{H}}
\newcommand{\cH}{\mathcal{H}}
\newcommand{\calS}{\mathcal{S}}
\newcommand{\neutre}{\mathbf{e}}
\newcommand{\diam}[1]{\mathrm{diam}\left( {#1} \right)}
\newcommand{\FLL}{F^\prime}
\newcommand{\SLL}{\Sigma^\prime}
\newcommand{\Gs}{\Gamma_m}
\newcommand{\Gsp}{{\Gamma^\prime}_m}
\newcommand{\Ds}{\Delta_m}
\newcommand{\WDs}{\Gs\wr\bZ} 
\newcommand{\BZ}{\Delta}
\newcommand{\hS}{\Sigma}
\newcommand{\range}{\mathrm{range}}
\newcommand{\supp}{\mathrm{supp}}
\newcommand{\frakp}{\mathfrak{p}}
\newcommand{\profile}{I}
\newcommand{\landing}{\mathfrak{l}}
\newcommand{\Landing}{\mathfrak{L}}
\newcommand{\rhoaff}{\tilde{\rho}}
\newcommand{\Clm}{c_1}
\newcommand{\CTn}{c_{\BZ}}
\newcommand{\CTnn}{C_{\BZ}}
\newcommand{\CSnn}{C^\prime_{\BZ}}
\title{\texorpdfstring{\textsc{Sofic approximations and quantitative
      measure couplings}}{Sofic approximations and quantitative
      measure couplings}}
\author{Amandine Escalier}
\date{\today}
\begin{document}
\maketitle
\begin{abstract}
  We build quantitative measure subgroup couplings from a Brieussel-Zheng
  diagonal product to a lamplighter group.
  We use them to answer the inverse problem of the quantification, namely find
  a group admitting a measure subgroup coupling with a prescribed group
  with prescribed quantification, in the case of the lamplighter group.


  
\end{abstract}
\setcounter{tocdepth}{2}
\tableofcontents
\newpage

\section{Introduction} \label{sec:Intro}
A recurring theme in group theory is the description of large-scale behaviour of
groups and their geometry. A well known example is the study of groups up to
quasi-isometry: it describes the large-scale or “coarse” geometry from the
\emph{metric} point of view. A \emph{measure} analogue of quasi-isometry –
called measure equivalence – was introduced by Gromov in \cite{Gro}. Two groups
$G$ and $H$ are \deffont{measure equivalent} if they both act freely and measure
preservingly on a standard measure space $(\Omega,m)$, such that the actions
commute and each action
admits a fundamental domain of finite measure.
A first elementary illustration of measure equivalent groups is given by
lattices in a common locally compact group. We refer to
\cite{Gab-survey,Fur-survey} for surveys on the topic.

In some cases, measure equivalence can show remarkable rigidity properties.
For instance Furman proved \cite{FurmanME} that any countable group which is
measure equivalent to a lattice in a simple Lie group $G$ of higher rank is
commensurable (up to finite kernel) to a lattice in $G$.
More recently
Guirardel and Horbez \cite{GuirardelHorbez} showed that for $n\! \geq\!
3$, any countable group that is measure equivalent to $\mathrm{Out}(F_n)$ is
virtually isomorphic to $\mathrm{Out}(F_n)$. 
Completely opposite to the aforementioned results, a famous
theorem of Ornstein and Weiss \cite{OW80} implies that 
all infinite countable amenable groups are measure equivalent. In particular –~unlike
quasi-isometry~– measure equivalence does \emph{not} preserve coarse geometric
invariants.

To overcome this issue it is therefore natural to look for some refinements of
this equivalence notion. For example Kerr and Li \cite{KerrLi} offer to sharpen
it by considering the Shannon entropy of partitions associated to the actions of
the two groups. We focus here on the \emph{quantiative} version as
introduced by Delabie, Koivisto, Le Maître and Tessera \cite{DKLMT}.

\subsection{Quantitative measure couplings}
Let $G$ and $H$ be two discrete groups that are measure equivalent
over a standard measure space $(\Omega,m)$ and denote by $X_G$ and $X_H$
the fundamental domains associated to their respective actions on $\Omega$.
The latter actions are both denoted by “$*$”.
In this case we have natural actions of $G$ on $X_H$, and $H$ on $X_G$, both
denoted by “$\cdot$” where for a.e. $x\in X_{H}$ and all $g\in G$,
we define $g\cdot x$ to be the unique
element of $H* g*x$ contained in $X_{H}$. We refer to
\bref{fig:Defactioninduite} for an illustration. The action of $H$ on $X_G$ is defined
analogously. The corresponding cocycles
$\alpha : G \times X_H \rightarrow H$ and $\beta: H \times X_G \rightarrow G$
are defined by 
\begin{equation}\label{eq:DefCocycle}
  \alpha(g,x)=h \Leftrightarrow h* (g* x) \in X_H
  \quad \text{and}\quad
  \beta(h,x)=g \Leftrightarrow g* (h* x) \in X_G.  
\end{equation}
When $x\mapsto\alpha(g,x)$ and $x\mapsto\beta(h,x)$ are in $L^p$ for all $g\in
G$ and $h\in H$, we say that the groups are \deffont{$L^p$-measure equivalent}.

This refinement allowed for example
Bader, Furman and Sauer \cite{BFSIntegrability} to obtain a new rigidity result:
they showed that any group which is $L^1$-measure equivalent to a lattice in $SO(n, 1)$
for some $n\geq 2$ is virtually a lattice in $SO(n,1)$.
It also led Bowen to
prove, in the appendix of \cite{AustinBowen}, that volume growth is invariant
under $L^1$-measure equivalence.
Delabie, Koivisto, Le Maître and Tessera offered in \cite{DKLMT} to
extend this quantification to a family of functions larger than $\{x\mapsto x^p,
\ p\in [0,+\infty]\}$. They also defined a relaxed version of measure
equivalence called \emph{measure subgroup coupling} and showed the monotonicity
of the isoperimetric profile under quantitative measure subgroup couplings,
a statement we make precise below.

\paragraph{Quantitative measure subgroup couplings}
Let $G=\langle S_G \rangle$ and $H=\langle S_H \rangle$ be two finitely
generated groups. A \deffont{measure subgroup coupling} from $G$ to $H$ is a
triple $(\Omega,m,X_H)$ such that:
\begin{itemize}
\item $(\Omega,m)$ is a standard measure space equipped with commuting
  measure-preserving free actions of $G$ and $H$, such that each action
  admits a Borel fundamental domain;
\item $X_H$ is a Borel fundamental domain of finite measure for the
  action of $H$ on $\Omega$. 
\end{itemize}
Such couplings arise naturally from coarse embeddings, as explained in
\cref{Ex:CoarseEmbeddings}.
Remark that when $(\Omega,m,X_H)$ is a measure subroup coupling, one can define
the corresponding cocycle $\alpha$ as in \cref{eq:DefCocycle}.
We refer to \cref{Subsec:MeasureCouplings} for more details on these couplings.

In the following $\bR_+$ denotes the interval $[0,+\infty [$ and $\bR^*_+$
stands for $]0,+\infty[$. Now let $\varphi:\bR_+\rightarrow \bR_+$ be a non-decreasing map. We say that
the measure subgroup coupling $(\Omega,m,X_H)$ is
\deffont{$\varphi$-integrable} if the cocycle $\alpha$ is
{$\varphi$-integrable}, namely if for all $g\in G$ there exists $c_g>0$ such
that $x\mapsto \varphi\left(c_g|\alpha(g,x)|_{S_H}\right) $ is integrable on $X_H$.

\paragraph{Monotonicity of the isoperimetric profile}
\label{Nota:Preccurly}
If $f$ and $g$ are two non-decreasing real functions, write $f \preccurlyeq g$ if there
exists some constant $C>0$ such that $f(x)={O}\big(g(Cx)\big)$ as $x$
tends to infinity\footnote{That is to say, if there exists $M>0$ and $x_0\in \bR$ such that
  $f(x)\leq M g(Cx)$ for all $x\geq x_0$}. We write $f\simeq g$ if $f
\preccurlyeq g$ and $g\preccurlyeq f$.

Let $G$ be a group generated by a finite set $S_G$. If $A\subseteq G$ we
denote by $\partial_{S_G}A$ the $S_G$-boundary of $A$, namely the set of elements $g\in
A$ for which there exists $s\in S_G$ such that $gs \notin A$. We will also denote
by $|g|_{S_G}$ the word length of an element $g\in G$. Recall that the
\deffont{isoperimetric profile} of $G$ is defined as\footnote{We chose to adopt
  the convention of 
  \cite{DKLMT}. In \cite{BZ}, the isoperimetric profile is defined as
  $\Lambda_G=1/\profile_G$.} 
\begin{equation*}
  \profile_{G}(n):= \sup_{|A|\leq n} \frac{|A|}{|\partial_{S_G}A|}.
\end{equation*}
Remark that due to the Følner criterion, a group is amenable if and only if its
isoperimetric profile is unbounded. Hence we can see the isoperimetric profile
as a way to measure the amenability of a group: the faster $\profile_G$ tends to
infinity, the more amenable $G$ is. For example the isoperimetric profile of
$\bZ$ verifies $\profile_{\bZ}(n) \simeq n$ while the profile of a lamplighter
group $G:=\bZ/m\bZ \wr \bZ$ verifies $I_G(n)\simeq \log(n)$, see for
example \cite{Erschler}.

Finally, let $k\in \bN$. We say that a measure subgroup coupling
$(\Omega,m,X_H)$ is \deffont{at most $k$-to-one} if for every $x\in X_H$ the map
$g\mapsto g^{-1}*(g\cdot x)$ has pre-images of size at most $k$.
Delabie, Koivisto, Le Maître and Tessera showed {\cite[Theorem~4.4]{DKLMT}} that
if  
\begin{itemize}
\item $\varphi$ and $t\mapsto t/\varphi(t)$ are two non-decreasing maps from
  $\bR^*_+$ to itself;
\item and there exists an at most $k$-to-one $\varphi$-integrable measure subgroup
  coupling from $G$ to $H$, for some $k\in \bN$; 
\end{itemize}
then $\varphi\circ I_H \preccurlyeq I_G$\label{Th:ProfiletOE}. 
\medskip

This last result together with the work of Brieussel and Zheng \cite{BZ}
constructing amenable groups with
prescribed isoperimetric profile, led us to study the inverse problem of
the quantification.

\subsection{Background on the inverse problem}
The family of amenable groups being quite large, the evaluation of its diversity
is at the heart of numerous works. To do so, one can rely for example on
geometric quantities –~such as volume growth or the isoperimetric profile~–
or probabilistic data –~such as entropy or return probability of random walks.
In order to quantify this diversity, several results have been focusing on
inverse problems namely, given a prescribed behaviour, does there exist a group having
such behaviour?

For example, the question of the existence of a
group with intermediate growth has been solved by Grigorchuk
\cite{Grigorchuk}. Later on, Bartholdi and Erschler
exhibited uncountably many groups with intermediate growth
\cite{BarErschler,BarErschlerPermut}. Another illustration is the striking work
of Brieussel and Zheng \cite{BZ}, answering the inverse problem for a large
family of quantities, such as the isoperimetric profile, entropy, or equivariant
$L^p$-compression.

The inverse problem of the quantification formalised below can be seen
as a way to quantify the diversity of amenable groups from the \emph{ergodic} point of view.

\begin{Q}[Inverse Problem]\label{Q:InversePb}
  Given a group $H$ and a non-decreasing map $\varphi:\bR^*_+\rightarrow \bR^*_+$, does 
  there exist a group $G$ such that:
  \begin{itemize}
  \item there exists a $\varphi$-integrable measure subgroup coupling from $G$ to $H$;
  \item and this quantification is optimal, that is to say such that any other
    $\varphi^\prime$-integrable coupling verifies $\varphi^\prime
    \preccurlyeq \varphi$?
  \end{itemize}
\end{Q}

Note that the monotonicity of the isoperimetric profile recalled on 
\cpageref{Th:ProfiletOE} provides us with an upper bound to the possible optimal
integrability. For example when $H=\bZ$, we obtain that if there exists
a $\varphi$-integrable measure sugroup coupling from $G$ to $H$, then
$\varphi \preccurlyeq I_G$. More generally, for any group $H$ the question can
be rephrased as follows.

\begin{Q}\label{Q:IPProfil}
  Given a group $H$ and a non-decreasing function $\varphi$, does 
  there exist a group $G$ such that $I_G\simeq \varphi\circ I_H$, and such that
  there exists a $\varphi$-integrable measure subgroup coupling from
  $G$ to $H$?
\end{Q}

Relying on Brieussel-Zheng groups, we tackled these questions in
\cite{EscalierOEZ} for $H=\bZ$.
This article studies the case when $H$ is a lamplighter group.

\subsection{Main result}\label{Subsec:MainResults}
The main result of this article is \cref{Th:CouplageLL} below.

To prove it, we rely on the diagonal products of lamplighter groups defined by
Brieussel and Zheng \cite{BZ}, providing groups with prescribed isoperimetric
profile. Such groups can be defined for profiles of the form $I_G\simeq \rho \circ
\log$ where $\rho$ belongs to the following family of
functions:
\begin{equation}\label{Def:classC}
  \calC:=\left\{ \rho :[1,+\infty) \rightarrow [1,+\infty) \ \left\vert
    \begin{matrix}
      \rho \ \text{is continuous}, \\
      \rho \ \text{and} \ x\mapsto x/\rho(x)\ \text{are non-decreasing}
    \end{matrix} \right.
  \right\}.
\end{equation}
Note that contrarily to \cite{BZ}, we do not assume that $\rho(1)=1$.
We refer to {\bref{Rq:rho-un}} for an explanation.

\begin{Th}\label{Th:CouplageLL}~Let $m\geq 2$ be an integer and let $L_m:=(\bZ/m\bZ) \wr \bZ$.
  For all $\rho\in \calC$ there exists a group $G$ such that
  \begin{itemize}
  \item $I_G\simeq \rho \circ \log$; and
  \item there exists an at most $1$-to-one measure subgroup coupling from $G$ to
    $L_m$ which is $\rho^{1-\varepsilon}$-integrable for all $\varepsilon>0$.
  \end{itemize}
\end{Th}

Recall that the lamplighter group $L_m$ verifies $I_{L_m}\simeq \log$. By monotonicity
of the profile under quantitative measure subgroup coupling (see \cpageref{Th:ProfiletOE}),
if there exists an at most $k$-to-one 
$\varphi$-integrable measure subgroup coupling from $G$ to $L_m$, then
$\varphi \circ \log \preccurlyeq I_G=\rho\circ \log$.  Namely $\varphi$ has to
grow slower than $\rho$. Therefore, the integrability of the coupling given by
\cref{Th:CouplageLL} is almost optimal.

\paragraph{Structure of the paper and strategy of the proof} The next two
sections present the tools we 
use to build our couplings. In \cref{Sec:DefdeBZ} we introduce the necessary material on
Brieussel-Zheng diagonal products; background on measure couplings is recalled
in \cref{Sec:MeasureCouplings}. The coupling is then built in
\cref{Sec:Construction} and relies on the criterion given in \bref{Th:soficME}.

The strategy is the following: given a quantification $\rho\in \calC$ we chose
$G$ to be a diagonal product with isoperimetric profile $I_G\simeq \rho \circ
\log$. The goal is then to satisfy the conditions of \cref{Th:soficME}, namely to define
sofic approximations $(\calG_n)_n$ and $(\calH_n)_n$ in respectively $G$ and
$H$, then injections $\iota_n:\calG_n\rightarrow \calH_n$ such that $\iota_n$
respects the geometry –~see \bref{eq:CondsoficIntegrability} for the precise
statement.

The appropriate sofic approximations in $G$
is defined in \cref{Sec:DefOfTheSA}. We then define the injections
$\iota_n$ between them and show that these injections respect the geometry in
\cref{Sec:Qt}. 

\begin{center}
  \adforn{24}
\end{center}
\paragraph*{Acknowlegements}
The author thanks Romain Tessera and Jérémie Brieussel, under
whose supervision the work presented in this article was carried out.
She thanks them for suggesting the topic, sharing their precious insights and for
their abundant useful advice. She also thanks the referees for their helpful
comments.

The author is supported by the European Union (ERC, Artin-Out-ME-OA,
101040507). Views and opinions expressed are however those of the authors only
and do not necessarily reflect those of the European Union or the European
Research Council. Neither the European Union nor the granting authority can be
held responsible for them.

The author was also funded by the Deutsche
Forschungsgemeinschaft (DFG, German Research Foundation) – Project-ID
427320536 – SFB 1442, as well as under Germany’s 
Excellence Strategy EXC 2044 –390685587, Mathematics Münster:
Dynamics–Geometry–Structure.
\section{Background on diagonal products}\label{Sec:DefdeBZ}
In order for this article to be self contained, we
recall necessary material from \cite{BZ} concerning the definition of
\emph{Brieussel-Zheng diagonal products}: we give the definition of such
a group and recall some results concerning the range
of an element. Finally we present in \cref{Subsec:FromIPtoBZ}
the tools needed to recover such a diagonal product starting with a prescribed
isoperimetric profile. 

\subsection{Definition of diagonal products} \label{subsec:DefDelta}
Recall that the wreath product of a group $G$ with $\bZ$, denoted by $G \wr
\bZ$, is defined as
\begin{equation*}
  G\wr \bZ:= \oplus_{m \in \bZ} G \rtimes \bZ,
\end{equation*}
where $\bZ$ acts on $\oplus_{m\in \bZ}G$ by shifting the index.
An element of $G\wr \bZ$ is a pair $(f,t)$ where $f$ is a map from $\bZ$ to $G$ with finite support
and $t$ belongs to $\bZ$. We refer to $f$ as the \deffont{lamp configuration}
and $t$ as the \deffont{cursor.} Finally we denote by $\supp(f)$ the
\deffont{support} of $f$ which is defined as
$\supp(f):=\left\{x\in \bZ \ | \ f(x)\neq e_G \right\}$.

\paragraph{General definition}
Let $A$ and $B$ be two finite groups. Let $(\Gamma_m)_{m\in \bN}$ be a sequence
of groups such that each $\Gamma_m$ admits a generating set of the form $A_m \cup B_m$
where $A_m$ and $B_m$ are finite subgroups of $\Gamma_m$ isomorphic respectively
to $A$ and $B$. For $a \in A$ we denote $a_m$ the copy of $a$ in $A_m$ and
similarly for $B_m$. Finally let $(k_m)_{m \in \bN}$ be a sequence of integers such that $k_{m+1} \geq 2 k_m$
for all $m$. We define $\Ds=\WDs$ and endow it with the generating set  \label{Def:SDeltam}
\begin{equation*}
  \mathcal{S}_{\BZ_m}:=
  \left\{ \left(\neutre_{\Gamma_m},1 \right)\right\} \cup \Big\{ \big(a_m \delta_0,0 \big) \ | \ a_m \in A_m\Big\}
  \cup \Big\{ \big(b_m \delta_{k_m},0 \big) \ | \ b_m \in B_m\Big\}.
\end{equation*}

\begin{Def}[\cite{BZ}]\label{Def:BZGr}\index{Diagonal product} \index{Brieussel-Zheng’s
    diagonal product}
  The \deffont{diagonal product} associated to the sequences
  $(\Gamma_m)_{m\in \bN}$ and $(k_m)_{m\in \bN}$, is the subgroup $\BZ$ of $ \left(\prod_m 
    \Gamma_m\right)\wr \bZ$ generated by 
  \begin{equation*}
    \calS_{\BZ}:=
    \Big\{ \Big({ {\big(\neutre_{\Gamma_m}\big)}_m,1 }\Big)\Big\} \cup
    \Big\{ \big( {\left(a_m \delta_0  \right)}_m,0 \big) \ | \ a \in A\Big\} 
    \cup \Big\{ \big( {\left( b_m \delta_{k_m} \right)}_m,0 \big) \ | \ b \in B\Big\}.
  \end{equation*}
\end{Def}
An element of $\BZ$ is therefore of the form $\left( (g_m)_m,t \right)$, where $t\in \bZ$
and $g_m:\bZ \rightarrow \Gamma_m$ for all $m \in \bN$. In the following, we
will denote by $\mbfg$ the sequence $(g_m)_{m\in \bN}$.
For an example of an element in $\BZ$ we refer to \cite[Example 2.2]{EscalierOEZ}.

The subgroup $\BZ$ is uniquely determined by the sequence 
$(k_m)_{m\in \bN}$. Moreover, for any sequence of integers $(k_m)_m$ and any sequence of
groups $(\Gamma_m)_m$ as above, there exists a diagonal product $\BZ$.

\paragraph{Relative commutators subgroups}\label{subsec:Derivedfunctions}
From now on and following \cite[Assumption 2.1]{BZ}, we assume that $\langle{ 
  \langle{\left[A_m,B_m\right]}}  \rangle \rangle \backslash \Gs$ and $A \times
B$ are isomorphic for all $m\in \bN $, where $\langle{\langle{\left[A_m,B_m\right]}\rangle}\rangle$
is the normal closure of $\left[A_m,B_m\right]$. Similarily as in \cite[Notation
2.6]{BZ}, we denote by 
$\theta_m : \Gs \rightarrow \langle{ \langle{\left[A_m,B_m\right]}}  \rangle
\rangle \backslash \Gs $ the natural projection. Let $\theta^A_m$ and $\theta^B_m$
denote the composition of $\theta_m$ with the projection to $A_m$ and $B_m$
respectively. Now let $m\in \bN$ and define $\Gsp:= \langle{ \langle{
    \left[A_m,B_m\right]}} \rangle \rangle$. If $(g_m,t)$ belongs to $\Delta_m$,
then there exists a unique $g^{\prime}_m \ : \ \bZ \rightarrow \Gsp$ such that
$g_m(x)=g^{\prime}_m(x)\theta_m\big(g_m(x)\big)$ for all $x\in \bZ$.
We refer to \cite[Examples 2.3 and 2.4]{EscalierOEZ} for examples of computations of the
maps~$\theta_m(g_m)$ and of~$g^\prime_m$.

\paragraph{The expanders case}\label{subsec:Expanders}

In \cite[Proposition 4.4]{BZ}, Brieussel and Zheng estimate the isoperimetric profile
of any diagonal product. We will need the finest estimate given by \cite[Theorem
4.6]{BZ} obtained when
$(\Gamma_m)_{m\in \bN}$ is a family of \emph{expanders}. Recall that $(\Gamma_m)_{m\in \bN}$
is said to be a sequence of \deffont{expanders}\index{Expander} if the sequence
of diameters $(\diam{\Gamma_m})_{m\in \bN}$ is unbounded and if there exists
$c_0>0$ such that for all $m\in \bN$ and all $n\leq |\Gamma_m|/2$ the
isoperimetric profile verifies $I_{\Gamma_m}(n)\leq c_0$.

Now, consider a family $(\Gamma_m)_{m\in \bN}$ of expanders. Assume that there
exists $c>0$ such that for all $\ell \geq 1$ there exists $\Gamma_{\frakp(\ell)}$ satisfying
that $\frac{1}{c} \ell \leq \diam{\Gamma_{\frakp(\ell)}}\leq c \ell$. We can thus define a
“parametrization” by fixing a map $\ell \mapsto \Gamma_{\frakp(\ell)}$. Consider now
two non-decreasing sequences 
$(k_m)_{m\in \bN}$ and $(l_m)_{m\in \bN}$ of real numbers greater than $1$ and
denote by $\BZ$ the diagonal product 
associated to $(\Gamma_{\frakp(l_m)})_{m\in \bN}$ and $(k_m)_{m\in \bN}$. Then
$\BZ$ is uniquely determined by the data of $(l_m)_{m\in \bN}$ and $(k_m)_{m\in
  \bN}$. In what follows, we will abuse notation and write $\Gamma_m$ instead of
$\Gamma_{\frakp(l_m)}$. Moreover we will always make the following assumptions
when talking about diagonal products.
We refer to \cite[Example 2.3]{BZ} for an explicit example of
diagonal product satisfying \textbf{(H)}.

\begin{HypBox}\label{Hyp:H}
    \textbf{Hypotheses (H)}\\
  \begin{enumerate}
  \item\label{H:1} $q:=|A\times B|=12$;
  \item\label{H:5} $k_0=0$ and $l_0=1$ and $\Gamma_0=A_0\times B_0$;
  \item\label{H:2} $\kappa \geq 3$ and $(k_m)_{m\in \bN^*}$ is a sub-sequence of
    $(\kappa^m)_{m\in \bN}$;
  \item\label{H:3} $\lambda \geq 2$ and $(l_m)_{m\in \bN}$ is a sub-sequence of
    $(\lambda^m)_{m\in \bN}$;
  \item\label{H:4}  $(\Gamma_m)_{m\in \bN}$ is a sequence of expanders such that $\Gamma_m$
    is a quotient of $A*B$ and there exists $\Clm >0$ such that
    $\diam{\Gamma_m}\leq \Clm l_m$ for all $m\in \bN$;
  \item\label{H:6}  the natural quotient map $A_m\times B_m\rightarrow
    \langle{ \langle{\left[A_m,B_m\right]}}  \rangle \rangle \backslash \Gamma_m$
    is an isomorphism, where $\langle{\langle{\left[A_m,B_m\right]}\rangle}\rangle=:\Gamma^\prime_m$
    is the normal closure of $\left[A_m,B_m\right]$.
  \end{enumerate}
\end{HypBox}
Recall from \cite[page 9]{BZ}, that in this case there exist $c_1$, $c_2>0$ such
that, for all $m$
\begin{equation}
  \label{eq:encadrementlngammap}
  c_1 l_m -c_2 \leq \ln \left\vert {\Gamma_m}\right\vert \leq c_1l_m + c_2.
\end{equation}

Finally, we adopt the convention of \cite[Notation 2.2]{BZ} and allow $k_m$
to take the value~$+\infty$. In this case $\BZ_m$ is the trivial group. In
particular when $k_1=+\infty$ the diagonal product $\BZ$ corresponds to the
usual lamplighter group $(A\times B)\wr \bZ$. 
\subsection{Range of an element}\label{Subsec:RangeDelta}
In this section we recall the notion of \emph{range} of an element $(\mbfg,t)$ in $\BZ$.
We denote by $\pi_{2} : \Delta \rightarrow \bZ$ the projection on the
second factor, ie. the map that sends $(\mbfg,t)\in \BZ$ to the value of its
cursor~$t$.

\begin{Def}[{\cite[Definition 2.8]{BZ}}]\label{Def:Range}
  If $w=s_1\ldots s_m$ is a word over ${\mathcal{S}}_\BZ$ we define its
  \deffont{range}\index{Range!Of a word} as
  \begin{equation*}
    \range(w) := \left\{\pi_2\left(\prod^{i}_{j=1}s_j \right) \,
      : \, i=0,\ldots,m \right\}.
  \end{equation*}
\end{Def}

The range is a finite subinterval of $\bZ$. It represents the set of sites
visited by the cursor. 
\begin{Def}\label{Def:Rangedelta}
  The \deffont{range} of an element $(\mbfg,t)\in \Delta$ is defined as the
  minimal length interval obtained as the range of a word
  over $\mathcal{S}_{\BZ}$ representing $(\mbfg,t)$.
\end{Def}
In what follows we will consider elements that can be written as a word with range
in an interval of the form $[0,n]$, where $n$ belongs to $\bN$. Therefore,
when there is no ambiguity we will denote by $\range(\mbfg,t)$ this interval,
namely $\range(\mbfg,t)=[0,n]$. For all $n\in \bN$ we
denote by $\landing(n)$ the integer such that $k_{\landing(n)}\leq n < 
k_{\landing(n)+1}$. In particular, for all $m>\landing(n)$ we have $g^\prime_m=\neutre_{\Gamma^\prime_m}$.

\begin{Claim}[{\cite[Fact 2.9]{BZ}}]\label{Claim:Data}
  An element $(\mbfg,t) \in \BZ$ is uniquely determined by $t$, $g_0$ and the sequence
  $ (g^{\prime}_m)_{m\leq \landing(\range(\mbfg,t))}$. 
\end{Claim}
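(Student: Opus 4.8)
The plan is to deduce the statement from \cref{Lmm:deffm} together with a support bound for the derived functions $g^\prime_m$. By \cref{Lmm:deffm}, the element $(\mbfg,t)$ is determined by $t$, by $g_0$, and by the whole family $(g^\prime_m)_{m\in\bN}$; moreover $\Gamma_0=A_0\times B_0$ gives $\Gamma^\prime_0=\langle\langle[A_0,B_0]\rangle\rangle=\{\neutre\}$, so $g^\prime_0\equiv\neutre$. Hence it suffices to prove that $g^\prime_m\equiv\neutre$ as soon as $m>\landing(\range(\mbfg,t))$, equivalently as soon as $k_m>\range(\mbfg,t)$: once this is granted, the tuple $\big(t,\,g_0,\,(g^\prime_m)_{m\le\landing(\range(\mbfg,t))}\big)$ recovers $t$, all of $g_0$ and all of the $g^\prime_m$ (the remaining ones being trivial), hence $\mbfg$, hence $(\mbfg,t)$.

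For the key bound I would fix a word $w=s_1\cdots s_N$ over $S_{\BZ}$ representing $(\mbfg,t)$ whose range is an interval $J$ of minimal diameter $R:=\range(\mbfg,t)$ (such a word exists by \cref{Def:Rangedelta}), and follow the partial products $(\mbff^{(i)},c^{(i)}):=s_1\cdots s_i$. The cursor $c^{(i)}$ stays inside $J$ for every $i$, since it only moves by steps $\pm1$ and $J$ is precisely the set of cursor positions occurring along $w$. Now fix $m$ with $k_m>R$ and a site $x\in\bZ$. By the form of $S_{\BZ}$, the $m$-th lamp value at $x$ is modified at step $i$ only when $s_i=\big((a_m\delta_0)_m,0\big)$ with $c^{(i)}=x$ (right-multiplying the value at $x$ by an element of $A_m$), or when $s_i=\big((b_m\delta_{k_m})_m,0\big)$ with $c^{(i)}=x-k_m$ (right-multiplying it by an element of $B_m$). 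Since $\diam(J)=R<k_m$, the sites $x$ and $x-k_m$ cannot both lie in $J$, so only one of these two kinds of move ever acts at $x$; therefore $g_m(x)$ is a product of elements of a single one of the subgroups $A_m$, $B_m$, and in particular $g_m(x)\in A_m\cup B_m$. As the projection $\theta_m$ maps $A_m$ and $B_m$ isomorphically onto the two direct factors of $\Gamma^\prime_m\backslash\Gamma_m\simeq A_m\times B_m$, an element of $\Gamma_m$ lying in $A_m$ or in $B_m$ has trivial $\Gamma^\prime_m$-part; hence $g^\prime_m(x)=\neutre$. Since $x$ was arbitrary, $g^\prime_m\equiv\neutre$.

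To conclude I would state injectivity directly: if $(\mbfg,t)$ and $(\mbfh,s)$ have the same image under $(\mbfg,t)\mapsto\big(t,g_0,(g^\prime_m)_{m\le\landing(\range(\mbfg,t))}\big)$, then $t=s$, $g_0=h_0$, the two lists have the same length $N=\landing(\range(\mbfg,t))=\landing(\range(\mbfh,s))$ and agree for $m\le N$, while for $m>N$ both $g^\prime_m$ and $h^\prime_m$ vanish by the previous paragraph; thus all the data feeding \cref{Lmm:deffm} coincide and $(\mbfg,t)=(\mbfh,s)$. The one genuinely geometric point — and the step I expect to require the most care — is the middle paragraph: justifying that a word realising the minimal range cannot write both $A_m$- and $B_m$-letters at a common site once $k_m$ exceeds the range, and that passing to such a minimal-range word is legitimate. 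Everything else is bookkeeping from \cref{Lmm:deffm} and from the structure of $\Gamma_m$ recorded in hypothesis \textbf{(H)}.
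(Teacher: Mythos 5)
Your proof is correct and essentially matches the paper's own (implicit) reasoning: the paper cites \cite{BZ} for this Claim and only explains the mechanism informally in the Example preceding it, where it observes that elements of $\Gsp$ can only appear at sites lying in both the range interval $J$ and $J+k_m$, hence $\supp(g^\prime_m)\subseteq [k_m,\max J]\cap J$ is empty once $k_m>\diam{J}$. You make precisely this observation rigorous — a minimal-range word keeps the cursor in $J$, $A_m$-letters land in $J$, $B_m$-letters land in $J+k_m$, and disjointness of $J$ and $J+k_m$ for $k_m>\diam{J}$ forces $g_m(x)\in A_m\cup B_m$, hence $g^\prime_m(x)=\neutre$ — and then correctly close the argument with \cref{Lmm:deffm}. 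The one thing I would add for completeness is an explicit word on why $g_m(x)\in A_m$ (or $B_m$) forces $g^\prime_m(x)=\neutre$: since $A_m\cap\Gsp=\{\neutre\}$, $\theta_m$ restricted to $A_m$ is an isomorphism onto its image, so $\theta_m(g_m(x))=g_m(x)$ and the unique decomposition $g_m=g^\prime_m\theta_m(g_m)$ yields $g^\prime_m(x)=\neutre$. That small gap aside, the argument is sound.
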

We refer to \cite[Examples 2.9 and 2.11]{EscalierOEZ} for illustrated examples.

\subsection{From the isoperimetric profile to the
  group}\label{Subsec:FromIPtoBZ}
We saw how to define a diagonal product from two sequences $(k_m)_m$ and
$(l_m)_m$. Now, given a prescribed map $\varphi$, the goal is to
define as in \cite[Appendice B]{BZ}, a diagonal product $\Delta$ such that $I_{\Delta}\simeq \varphi$. 

\paragraph{\texorpdfstring{Definition of $\Delta$}{Definition of the
    diagonal product}}
Recall that in the particular case of expanders, a
diagonal product $\BZ$ is uniquely determined by the sequences 
$(k_m)_{m\in \bN}$ and $(l_m)_{m\in \bN}$, where $l_m$ corresponds to the
diameter of $\Gs$.
Thus, starting from a prescribed function $I$, the goal is to define sequences
$(k_m)_{m\in \bN}$ and $(l_m)_{m\in \bN}$ such that the corresponding $\BZ$ 
verifies $\profile_{\BZ} \simeq I$. Brieussel and Zheng’s construction of
diagonal products is possible when the profile $I$ is of the form $I\simeq \rho
\circ \log$ where $\rho$ belongs to the following set $\calC$,
\begin{equation*}
  \calC:=\left\{ \rho :[1,+\infty) \rightarrow [1,+\infty) \ \left\vert
    \begin{matrix}
      \rho \ \text{continuous}, \\
      \rho \ \text{and} \ x\mapsto x/\rho(x)\ \text{non-decreasing}
    \end{matrix} \right.
  \right\}.
\end{equation*}
Equivalently this is the set of functions $\rho$ satisfying 
\begin{equation}
  \label{Rq:IneqTildeRho}
 \left( \forall x,c\geq 1 \right) \quad \rho(x)\leq \rho(cx) \leq c\rho(x).  
\end{equation}
So let $\rho \in \calC$. Note that unlike \cite{BZ}, we do not assume here that
$\rho(1)=1$. We refer to \cref{Rq:rho-un} for more details.

Combining \cite[Proposition B.2 and
Theorem 4.6]{BZ} leads to the following proposition. Remember that with our
convention the isoperimetric profile considered in \cite{BZ} corresponds to
$1/I_{\BZ}$.

\begin{Prop}[\cite{BZ}]\label{Prop:DefdeDelta}
  Let $\kappa, \lambda \geq 2$. For any $\rho \in \calC$ there exists a
  subsequence $(k_m)_{m\in \bN^*}$ of $(\kappa^n)_{n\in \bN}$ and a subsequence
  $(l_m)_{m\in \bN}$ of $(\lambda^n)_{n\in \bN}$ such that the group $\BZ$ defined in
  \cref{subsec:Expanders} verifies $\profile_{\BZ} \simeq \rho \circ \log$.
\end{Prop}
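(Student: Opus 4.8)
The plan is to derive \cref{Prop:DefdeDelta} by combining two inputs of Brieussel--Zheng: their computation of the isoperimetric profile of a diagonal product in terms of the two defining sequences, and their construction going the other way, from a prescribed function to such sequences. First I would recall the dictionary of \cref{Subsec:FromIPtoBZ}: a pair of sequences $(k_m)_m$, $(l_m)_m$ (with $l_m$ comparable to $\diam{\Gs}$) determines a piecewise affine map $\rhoaff$ characterised by $\rhoaff\big(n\,l_{\landing(n)}\big)=n$ (see \cref{Prop:B2}), that is, the map which is linear of slope $1/l_m$ on $[k_m l_m,k_{m+1}l_m]$ and constant on $[k_{m+1}l_m,k_{m+1}l_{m+1}]$; and \cite[Theorem 4.6]{BZ} --- read with the convention of \cite{BZ} that $\Lambda_{\BZ}=1/\profile_{\BZ}$ --- asserts that for a diagonal product built from an expander family satisfying \textbf{(H)} one has $\profile_{\BZ}\simeq\rhoaff\circ\log$. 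It therefore suffices, given $\rho\in\calC$, to exhibit subsequences $(k_m)_m$ of $(\kappa^n)_n$ and $(l_m)_m$ of $(\lambda^n)_n$ together with expander fibres $(\Gs)_m$ realising \textbf{(H)} and such that the resulting $\rhoaff$ satisfies $\rhoaff\simeq\rho$; then $\profile_{\BZ}\simeq\rhoaff\circ\log\simeq\rho\circ\log$.

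For the construction of the sequences --- this is essentially \cite[Proposition B.2]{BZ} --- I would proceed recursively, forcing the breakpoints $(k_m l_m,k_m)$ of $\rhoaff$ to stay within a bounded multiplicative distance of the graph of $\rho$. Concretely: take $k_m=\kappa^m$ (the degenerate case $k_1=+\infty$, corresponding to $\rho\simeq\mathrm{id}$ and the plain lamplighter $(A\times B)\wr\bZ$, is handled separately), and, using continuity of $\rho$ together with $\rho(x)\to\infty$, choose $l_m=\lambda^{j_m}$ with $j_m$ as small as possible so that $\rho(k_m l_m)\ge k_m$; then $\rho(k_m l_m)\asymp k_m$ by the inequality $\rho(\lambda x)\le\lambda\rho(x)$ of \cref{Rq:IneqTildeRho}, the sequence $(j_m)_m$ is automatically non-decreasing (monotonicity of $\rho$ and of $x\mapsto x/\rho(x)$), and $(k_m)_m$, $(l_m)_m$ are subsequences of geometric sequences with $k_{m+1}=\kappa k_m\ge 2k_m$. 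The remaining clauses of \textbf{(H)} --- the $\Gs$ being expander quotients of $A*B$ with the relative-abelianisation property and $\diam{\Gs}\le\Clm l_m$ --- are secured by selecting for each $m$ the member of the expander family of \cite[Example 2.3]{BZ} whose diameter is comparable to $l_m$. One then checks, using monotonicity of $\rho$, the bound $\rho(\lambda x)\le\lambda\rho(x)$, and the breakpoint estimate $\rho(k_m l_m)\asymp k_m$, that on every linear piece and every (factor-$\kappa$-wide) constant piece of $\rhoaff$ the quantities $\rhoaff$ and $\rho$ agree up to a multiplicative constant depending only on $\kappa$ and $\lambda$; hence $\rhoaff\simeq\rho$ on all of $[1,+\infty)$, and therefore $\rhoaff\circ\log\simeq\rho\circ\log$.

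Putting the pieces together, the diagonal product $\BZ$ constructed above satisfies \textbf{(H)} by design, so \cite[Theorem 4.6]{BZ} gives $\profile_{\BZ}\simeq\rhoaff\circ\log\simeq\rho\circ\log$, proving \cref{Prop:DefdeDelta}. The step I expect to be the crux --- and the one genuinely carried out in \cite[Appendix B]{BZ} rather than being routine --- is the last verification of the previous paragraph: controlling the error between $\rho$ and the piecewise affine interpolant $\rhoaff$ whose breakpoints are geometrically (hence widely) spaced, where one cannot afford the crude bound $\rho(cx)\le c\rho(x)$ uniformly and must exploit both monotonicity assumptions packaged in $\rho\in\calC$, together with the fact that $\kappa$ and $\lambda$ are fixed so that every ``factor $\kappa$'' or ``factor $\lambda$'' discrepancy is absorbed by $\simeq$; a secondary point is to confirm that restricting $(k_m)_m$, $(l_m)_m$ to subsequences of the prescribed geometric sequences $(\kappa^n)_n$, $(\lambda^n)_n$ still leaves enough flexibility to track an arbitrary $\rho\in\calC$.
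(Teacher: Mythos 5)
Your proposal is correct and follows exactly the route the paper takes: the paper's entire justification is the one-line citation ``combining \cite[Proposition B.2 and Theorem 4.6]{BZ}'' (with the convention flip $\Lambda_{\BZ}=1/\profile_{\BZ}$), and your sketch is a faithful unpacking of what those two references contain --- Theorem 4.6 for $\profile_{\BZ}\simeq\rhoaff\circ\log$ under \textbf{(H)}, and the Appendix B construction of $(k_m)_m$, $(l_m)_m$ as geometric subsequences with $\rhoaff\simeq\rho$. You correctly isolate the only genuinely non-routine step (the comparison of $\rho$ with its widely spaced piecewise affine interpolant), which is indeed the content of \cite[Appendix B]{BZ} rather than of this paper.
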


\begin{Ex}[{\cite[Example 4.5]{BZ}}]\label{Ex:Alpha} Let $\alpha >0$. If
  $\rho(x):=x^{1/(1+\alpha)}$ then the diagonal 
  product $\BZ$ defined by $k_m=\kappa^m$ and $l_m=\kappa^{\alpha m}$ verifies
  $I_{\BZ}\simeq \rho \circ \log$. 
\end{Ex}
\begin{Ex}\label{Ex:logloglog} If $\rho=\log$ then 
  the diagonal product $\BZ$ defined by $k_m=\kappa^m$ and
  $l_m=\kappa^{\kappa^m}$ verifies $I_{\BZ}\simeq \log\circ \log$.
\end{Ex}

Recall from below \cref{eq:encadrementlngammap} that we allow $k_m$ to take the value $+\infty$.
\begin{Ex}
  If $\rho(x)=x$ then the diagonal product defined by $l_m=1$ for all $m$ and
  $k_m=+\infty$ for all $m\geq 1$ verifies $\BZ=(A\times B)\wr \bZ$ and
  $I_\BZ\simeq \log$.  
\end{Ex}
\paragraph{Technical tools}\label{subsec:TechTools}
Now let us recall the intermediary functions
defined in \cite[Appendix B]{BZ} and some of their properties.

Let $\rho \in \calC$. %
The construction of
a group with the given isoperimetric profile $\rho \circ \log$ is
based on the approximation of $\rho$ by a piecewise linear function $\rhoaff$.
For the quantification of our measure coupling, many of our computations will use
$\rhoaff$ and some of its properties.
The following lemma recalls {\cite[Equation (52) and Lemma B1]{BZ}} defining $\rhoaff$.

\begin{Lmm}[\cite{BZ}]
  \label{Prop:B2} Let $\rho\in \calC$. 
  Let $(k_m)_m$ and $(l_m)_m$ given by \cref{Prop:DefdeDelta} and $\BZ$ be the
  corresponding diagonal product. Let $\rhoaff$ be the map defined by 
  \begin{equation}\label{Def:Tilde}
    \rhoaff(x):= \begin{cases}
      x/l_m & \text{if} \ x\in [k_ml_m,k_{m+1}l_m],\\
      k_{m+1} & \text{if} \ x\in [k_{m+1}l_m,k_{m+1}l_{m+1}],
    \end{cases}
  \end{equation}
  Then, there exists $c>0$ such that $\rho(x)/c\leq \rhoaff(x)\leq c\rho(x)$ for
  all $x\geq 1$.
\end{Lmm}

\begin{Rq}\label{Rq:rho-un}
  In \cite{BZ}, Brieussel and Zheng further request that $\rho(1)=1$ to ensure that the
  constant $c$ does not depend on $(k_m)_m$ nor $(l_m)_m$. Since we do not
  need the universality of the contant $c$ in the present article, we drop the
  assumption. Applying the results of \cite{BZ} to
  $\frac{1}{\rho(1)}\rho$ instead of $\rho$ leads to the above \cref{Prop:B2}. 
\end{Rq}

\section{Preliminaries on measure couplings}\label{Sec:MeasureCouplings}
This section recalls some material from \cite[Section~2]{DKLMT} regarding
measure couplings. We start with
general definitions on measure equivalence and measure subgroup couplings, then
turn to their quantitative version. Finally we present the two couplings
building techniques we use in order to show our main theorem.

\subsection{Measure couplings}\label{Subsec:MeasureCouplings}
A \deffont{standard Borel space} $(\Omega, \mathcal{B}(\Omega))$ is a measurable
space whose $\sigma$-algebra $\mathcal{B}(\Omega)$ consists of the the Borel
subsets coming from some Polish (separable and completely metrisable) topology
on $\Omega$. 
A \deffont{standard measure space} $(\Omega,m)$ is a standard Borel space
$(\Omega,\mathcal{B}(\Omega))$ equipped with a non-zero 
measure~$m$.

A \deffont{measure-preserving action}\index{Measure!Preserving action} 
of a discrete countable group $G$ on a measure space $(\Omega,m)$ is an action of
$G$ on $\Omega$, denoted here by “$*$”, such that the map sending $(g,x)$ to $g*x$ is a Borel map and
$m(E)=m(g* E)$ for all $E\subseteq \mathcal{B}(\Omega)$ and all $g\in G$.
A measure-preserving action of $G$ on $(\Omega,m)$ is
\deffont{free},\index{Free action} if for almost every $x\in\Omega$ we have $g* x =
x$ if and only if $g=\neutre_G$. A \deffont{fundamental domain} for an action of
$G$ on $(\Omega,m)$ is a Borel subset $X_G\subseteq \Omega$ which intersects
almost every $G$-orbit at exactly one point: in other words, there is a full
measure $G$-invariant Borel set $\Omega^*\subseteq \Omega$ such that for all
$x\in \Omega^*$, the intersection $\mathrm{Orb}_G(x)\cap X_G$ is a singleton. A
measure preserving action of a countable group on a standard measure space is
\deffont{smooth} if it admits a fundamental domain.

Let $G$ and $H$ be two countable groups. Following the terminology of
\cite{DKLMT}, we call a \deffont{measure equivalence coupling} from $G$ to $H$ a
quadruple $(\Omega,X_G,X_H,m)$ such that $G$ and $H$ both act freely, measure
preservingly, smoothly and commutingly on the standard measure space
$(\Omega,m)$, and $X_G$ and $X_H$ are fundamental domains of finite measure for
the $G$- and the $H$-action on $\Omega$, respectively. Note that in this case
$(\Omega,m)$ is a $\sigma$-finite measure space.

An elementary example of measure equivalence coupling is given by considering a
countable group $G$ endowed with the counting measure $m$, taking $G=H$ and
as actions the left and right translations of $G$ on itself. The corresponding
coupling is then $(G,\{e_G\},\{e_G\},m)$.

When $G$ is an infinite index subgroup of $H$, however, the left action by
translation of $G$ on $H$ does not admit any fundamental domain of finite
measure. This is why Delabie, Koivisto, Le Maître and Tessera introduced the
notion of \emph{measure subgroup coupling}, relaxing the condition on the
fundamental domain of the $G$-action.

\begin{Def}[{\cite[Definition 2.4]{DKLMT}}]\label{Def:MSGr}
  Let $G$ and $H$ be two countable groups. A \deffont{measure subgroup
    coupling} from $G$ to $H$ is a triple $(\Omega,X_H,m)$ such that:
  \begin{itemize}
  \item $(\Omega,m)$ is a standard measure space equipped with smooth commuting\\
    measure-preserving free actions of $G$ and $H$; 
  \item and the $G$-action on $\Omega$ admits a Borel fundamental domain;
  \item and $X_H$ is a Borel fundamental domain of finite measure for the
    action of $H$ on $\Omega$. 
  \end{itemize}
\end{Def}
Remark that a measure equivalence coupling $(\Omega,X_G,X_H,m)$ from $G$ to $H$ induces two measure
subgroup couplings, namely $(\Omega,X_H,m)$ from $G$ to $H$ and $(\Omega,X_G,m)$ from $H$ to $G$.
Also, as suggested above, when $G$ is a subgroup of $H$, a measure subgroup coupling
grom $G$ to $H$ is given by $(H,\{e_H\},m)$, where $m$ denotes again the counting
measure, and $G$ acts on $H$ by left translation, and $H$ on itself by right
translation. 

We saw that two lattices in a common locally compact group are measure
equivalent. Analogously, another example of measure subgroup coupling is given by
considering $\calG$ a locally compact group endowed with a Haar measure $m$,
then taking $G\leq \calG$ to be a discrete group and $H\leq \calG$ to be a
lattice. Letting $G$ and $H$ act respectively by left and right translations on
$\calG$, and denoting by $X_H$ a fundamental domain for the $H$-action then produces
the triple $(\calG,X_H,m)$ which is a measure subgroup coupling from $G$ to $H$.
\smallskip

Finally, let $k\in \bN$. We say that a measure subgroup coupling
$(\Omega,X_H,m)$ from $G$ to $H$ is at \deffont{most $k$-to-one} if for every
$x\in X_H$ the map $g\mapsto g^{-1}*(g\cdot x)$ has pre-images of size at most
$k$. Such couplings arise naturally from coarse embeddings, as we describe
below.

\begin{Ex}[Coarse embeddings and regular maps]\label{Ex:CoarseEmbeddings}
  A map $f\colon G \rightarrow H$ is a \deffont{regular map} if it
  is Lipschitz and verifies $\sup_{h\in H}|f^{-1}(\{h\})|<+\infty$. Coarse
  embeddings, for example, are regular maps. Recall that a \deffont{coarse
    embedding} is a map $f\colon G\rightarrow H$ such 
  that there exists two proper non-decreasing functions $\rho_-,\rho_+:\bR_+
  \rightarrow \bR_+$ such that $\lim_{t\rightarrow \infty}\rho_{-}(t)=\infty$,
  and satisfying for all $g_1,g_2\in G$,
  \begin{equation*}
    \rho_{-} \left(d_G\left( g_1,g_2 \right)  \right)\leq
    d_H\left( f\left( g_1 \right), f\left( g_2 \right) \right)
    \leq \rho_{+} \left(d_G\left( g_1,g_2 \right)  \right).
  \end{equation*}

  Delabie et al. showed \cite[Theorem~5.4]{DKLMT} that if $G$ is amenable and if
  there exists a regular map from $G$ to $H$, then there exists $k\in \bN$
  such that there is an at most $k$-to-one measure subgroup coupling from $G$ to
  $H$. A more precise version of this statement is given in \bref{Th:RegularEmbeddings},
  where we add the integrability of the measure subgroup.
\end{Ex}

Let us now define the quantitative version of such couplings.

\subsection{Quantitative couplings}\label{Sec:DefQt}

\subsubsection{Definitions and examples}
Recall that if a finitely generated group $G$ acts on a space $\Omega$ and if $S_G$
is a finite generating set of $G$, one can define the \deffont{Schreier graph} associated to this
action. It is the graph whose set of vertices is $\Omega$ and set of edges is
$\{(x,s* x) \, | \, s\in S_G, \,x\in \Omega \}$. This graph is endowed with a natural metric
$d_{S_G}$ on the orbits, fixing the length of an edge to one. Remark that if $S^\prime_G$ is another
generating set of $G$ then there exists $C>0$ such that for all $x\in \Omega$
and all $g\in G$
\begin{equation*}
 \frac{1}{C}d_{S_G}(x,g * x) \leq d_{S^{\prime}_G}(x,g* x) \leq Cd_{S_G}(x,g* x).
\end{equation*}

Finally if $(\Omega,X_H,m)$ is a measure subgroup coupling from $G$ to $H$,
we have a natural action of $G$ on $X_H$
denoted by “$\cdot$” and illustrated on \cref{fig:Defactioninduite}, where for a.e. $x\in
X_{H}$ and all $g\in G$ we define $g\cdot x$ to be the unique element of $H* g*
x$ contained in $X_{H}$, \emph{viz.}
\begin{equation*}
 \{ g \cdot x\}= \left(H* g* x  \right)  \cap X_H.
\end{equation*}

\begin{Def}[{\cite[Def. 2.20]{DKLMT}}]\label{Def:QuantMsgr}
  Let $\varphi:\bR_+\rightarrow \bR_+$ be a non-decreasing map. Let $G$ and $H$
  be two countable groups and denote by $S_H$ a finite generating set of $H$.
  A measure subgroup coupling
  $(\Omega,X_H,m)$ from $G$ to $H$ is said to be \deffont{$\varphi$-integrable} if
  for all $g\in G$ there exists $c_g>0$ such that
  \begin{equation*}
    \int_{X_H} \varphi\left({c_g} d_{S_H}(g* x,g \cdot x)
    \right)\mathrm{d}m(x) <+\infty.
  \end{equation*}
\end{Def}
\begin{figure}[htbp]
  \centering
  \includegraphics[width=\textwidth]{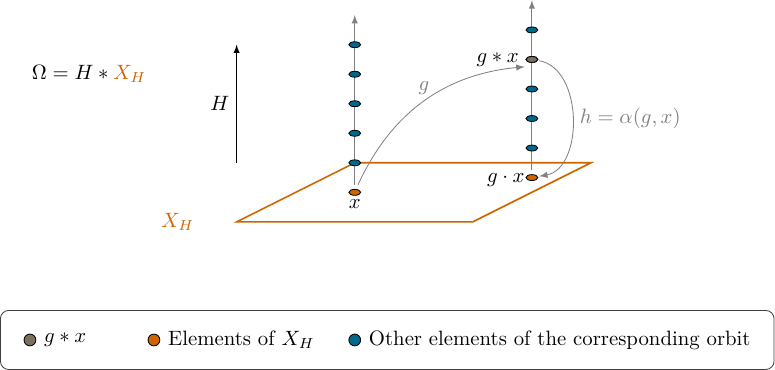} 
  \caption{Definition of $g\cdot x$, adapted from \cite{EH}}
  \label{fig:Defactioninduite}
\end{figure}

The constant $c_g$ in the definition is introduced for the 
integrability to be independent of the choice of generating set $S_H$.
If $\varphi(x)=x^p$ we will sometimes 
talk of \deffont{$\mbfL^{\mbfp}$-integrability} instead of
$\varphi$-integrability. In particular, $L^0$ means that no integrability
assumption is made. The coupling is said to be \deffont{$\mbfL^\infty$-integrable}
if $x\mapsto d_{S_H}(g* x,g \cdot x)$ is essentially bounded for all $g\in G$. 
\begin{Rq}[Reformulation using cocycles]
  When $(\Omega,X_H,m)$ is a measure subgroup coupling from $G$ to $H$, the
  associated \deffont{cocycle} $\alpha:G\times X_H\rightarrow H$ is defined as
  being the map that verifies $\alpha(g,x)* g* x=g\cdot x$, for all $g\in G$ and
  a.e. $x\in \Omega$. In other words, $\alpha(g,x)$ is the unique element of
  $H$ sending $g*x$ to the fundamental domain $X_H$.
  
  Now, denote by $|h|_{S_H}$ the length of $h$ in $H=\langle{S_H}  \rangle$.
  An equivalent manner to formulate the above \cref{Def:QuantMsgr} using cocycles
  is to replace $d_{S_H}(g* x,g \cdot x)$ in the integral by $|\alpha(g,x)|_{S_H}$.
\end{Rq}
\begin{Rq}[Quantitative measure equivalence]\label{Rq:MEQt}
  The authors also defined in \cite{DKLMT} the quantiative version of measure
  equivalence. Let $(\Omega,X_G,X_H,m)$ be a measure equivalence coupling from
  $G$ to $H$, and denote by $\alpha$ and $\beta$ the corresponding cocycles
  defined in \cref{eq:DefCocycle}.
  One says that the measure equivalence is
  \deffont{$(\varphi,\psi)$-integrable} if $\alpha$ and $\beta$ are
  respectively $\varphi$-integrable and $\psi$-integrable. This is equivalent to
  asking for the measure subgroup couplings $(\Omega,X_H,m)$ to be
  $\varphi$-integrable and $(\Omega,X_G,m)$ to be $\psi$-integrable.
\end{Rq}

Integrable measure subgroup couplings naturally arise from regular maps, and
thus, in particular, from coarse embeddings. This is what the following theorem
recalls. We refer to \bref{Ex:CoarseEmbeddings} for the definitions of coarse
embeddings and regular maps.
\begin{Th}[{\cite[Theorem~5.4]{DKLMT}}]\label{Th:RegularEmbeddings}
  Let $G$ and $H$ be two finitely generated groups.
  \begin{itemize}
  \item If there exists $k\in \bN$ such that there is an at most $k$-to-one
    $L^\infty$-integrable measure subgroup coupling from $G$ to $H$, then $G$
    regularly embeds into $H$.
  \item If $G$ is amenable and regularly embeds into $H$, then there exists
    $k\in \bN$ such that there is an at
    most $k$-to-one $L^\infty$-integrable measure subgroup coupling from $G$ to $H$.
  \end{itemize}
\end{Th}

\subsubsection{Isoperimetric profile}
Now recall from the introduction that the \deffont{isoperimetric profile} of
$G$ is the map $I_G$ defined by $ \profile_{G}(n):= \sup_{|A|\leq n}
{|A|}/{|\partial_{S_G} A|}$.
\begin{Th}[{\cite[Theorems 1 and 4.4]{DKLMT}}]\label{Th:ProfileetMEMSGR}
  Let $\varphi:\bR^*_+\rightarrow \bR^*_+$ be a non-decreasing map such that
  $t\mapsto t/\varphi(t)$ is also non-decreasing. Let $G$ and $H$ be two
  finitely generated groups.
  If either
  \begin{itemize}
  \item there exists a $(\varphi,L^0)$-measure equivalence coupling from $G$ to $H$;
  \item or there exists an at most $k$-to-one $\varphi$-integrable measure subgroup
    coupling from $G$ to $H$, for some $k\in \bN$; 
  \end{itemize}
  then $\varphi\circ I_H \preccurlyeq I_G$. 
\end{Th}

Assume for example that $H=\bZ$. Then its profile verifies $I_\bZ\simeq
\mathrm{id}$. The above theorem proves that, if there exists an
at most $k$-to-one $\varphi$-integrable measure sugroup coupling from a group
$G$ to $\bZ$, then $\varphi\preccurlyeq I_G$.

\subsection{Couplings building techniques}
In this section we recall the necessary material from \cite{DKLMT,CDKT}. We present the
tools needed to build the coupling of \bref{Th:CouplageLL}, namely sofic
approximations and Følner tiling sequences. The necessity of both these tools is
discussed in \bref{Rq:NecessiteDesOutils}.

\subsubsection{Sofic approximations}
In this paragraph $G$ will be a finitely generated group endowed
with a finite generating set $S_G$, and $(\mathcal{G}_n)_{n\in \bN}$ will be a
sequence of finite directed graphs, whose edges are labeled by the elements of
$S_G$. Furthemore “$x\in \calG_n$” will mean that $x$ is a vertex of $\calG_n$.

Let $r>0$ and denote by $\mathcal{G}_n^{(r)}$ the set of elements $x\in \mathcal{G}_n$ such that
$B_{\mathcal{G}_n}(x,r)$ is isomorphic to $B_G(e_G,r)$ seen as directed labeled
graphs.
We say that
$(\mathcal{G}_n)_{n\in \bN}$ is a \deffont{sofic approximation} if for every $r>0$
\begin{equation*}
  \lim_{n\rightarrow \infty}
  \frac{ \left\vert \mathcal{G}_n^{(r)}\right\vert}{\left\vert{\mathcal{G}_n}\right\vert}
  = 1.
\end{equation*}

\begin{Ex} Let $G$ be amenable and $(F_n)_n$ be a Følner sequence of
  $G$. For all $n\in \bN$, let $\calG_n$ be the graph whith vertex set $F_n$, and
  such that there is an edge labelled by $s\in S_G$ between two vertices $x$ and
  $y$ if and only if $x=ys$. Then $(\calG_n)_n$ is a Sofic approximation of $G$.
\end{Ex}

In \cite{CDKT} Delabie, Koivisto, Le Maître and Tessera prove a condition for a measure
subgroup coupling to be $\varphi$-integrable, using sofic approximations.

\begin{Th}[\cite{CDKT}]\label{Th:soficME}
  Let $\mathcal{U}$ be a non-principal ultrafilter on $\bN$. 
	Let $\varphi\colon \bR^+\rightarrow \bR^+$ be a non-decreasing map. Let $G$ and $H$
  be two finitely generated groups
  and let $(\mathcal{G}_n)_n$ be a sofic approximation of $G$.
  Let $\iota_n\colon \mathcal{G}_n\rightarrow
  H$ be an injective map such that,
  for every $s\in S_G$ there exists $\delta>0$ such that
    \begin{equation}
    \label{eq:CondsoficIntegrability}
      \lim_{R\to\infty}\sum_{r=0}^R \varphi\!\left(\delta r\right) \lim_{\mathcal{U}}
      \frac{\left\vert\left\{x\in \mathcal{G}_n^{(1)}\mid
        d_{H}(\iota_n(x),\iota_n(x\cdot s))=r\right\}\right\vert}%
  {\left\vert\mathcal{G}_n\right\vert}<\infty.
    \end{equation}
    Then there exists an at most one-to-one $\varphi$-integrable measure
    subgroup coupling from $G$ to~$H$.
\end{Th}

Given two amenable groups $G$ and $H$, and a sofic approximation $(\calG_n)_n$,
defining injections $\iota_n$ satisfying \cref{eq:CondsoficIntegrability} is not
always straightforward. Our strategy relies on the notion of \emph{Følner tiling
  sequence}, introduced in \cite{DKLMT}.  

\subsubsection{Følner tiling sequence: shifts and tiles}
Følner tiling sequences are tools to choose the needed sofic approximation
$(\calG_n)_n$ and define an injection $\iota_n$ from
$\calG_n$ to $H$, that respects the geometry, that is to say satisfies \cref{eq:CondsoficIntegrability}. 

\begin{Def}[{\cite{DKLMT}}]\label{Def:FOTS}\index{Følner tiling sequence}
  Let $G$ be an amenable group and $(\Sigma_n)_{n\in \bN}$ be a sequence of finite subsets
  of $G$. Define by induction the sequence $(T_n)_{n\in \bN}$ by $T_0:=\Sigma_0$ and
  $T_{n+1}:=\Sigma_{n+1}T_n$.\\
  We say that $(\Sigma_n)_{n\in \bN}$ is a (right) \deffont{Følner
    tiling sequence} if
  \begin{itemize}
  \item $(T_n)_{n\in \bN}$ is a (right) Følner sequence, \textit{viz.} $\lim_{n \rightarrow \infty}
      {|T_ng\backslash T_n|}/{|T_n|} = 0$ for all $g\in G$;
    \item $T_{n+1}= \sqcup_{\sigma \in \Sigma_{n+1}} \sigma T_n$.
  \end{itemize}
  We call $\Sigma_n$ a set of \deffont{shifts}\index{Shifts} and $T_n$
  a \deffont{tile}\index{Tiles}.
\end{Def}
Remark that in particular $T_n=\Sigma_n\cdots \Sigma_0$ for all $n\in \bN$, and for
all $g\in T_n$ there exists a unique sequence 
$(\sigma_i)_{i=0,\ldots,n}$ such that
\begin{equation*}
  g=\sigma_n\cdots \sigma_0 \quad \text{and} \quad 
  \sigma_i\in \Sigma_i,\ \forall i\in\{0,\ldots,n\}.
\end{equation*}
\begin{Ex}[{\cite[Proposition 6.10]{DKLMT}}]
  Let $G=\bZ$ and for all $n\in \bN$ let  $\Sigma_n:=\{0,2^{n}\}$. Then
  $(\Sigma_n)_n$ is a Følner tiling sequence and the corresponding sequence of
  tiles is the sequence of intervals of the form $T_n:=[0,2^{n+1}-1]$, for $n\in
  \bN$.
\end{Ex}

We refer to \bref{Sec:TilesBZ} for the definition of Følner tiling sequences in
diagonal products, and to \bref{Sec:SoficLamplighter} for a definition of
another tiling in a lamplighter group. Further examples of Følner tilings can be
found in \cite[Section~6]{DKLMT}.

\begin{Rq}[On the choice of tools]\label{Rq:NecessiteDesOutils}
  Our proof relies on the criterion given in \cref{Th:soficME} to build and
  quantify the coupling. This criterion requires us to build injections
  $\iota_n$ that respect the geometry of the groups, that is to say, satisfy
  \cref{eq:CondsoficIntegrability}. This is achieved by using Følner tiling
  sequences. 

  On the other hand, relying only on Følner tiling sequences is not possible for
  us in this case. More precisely, let $G$ and $H$ be two amenable groups with
  respective Følner tiling sequences
  $(\Sigma_n)_n$ and $(\Sigma^\prime_n)_n$. Proposition 6.9 in \cite{DKLMT}
  gives a criterion to show that $G$ and $H$ admit a $(\varphi,\varphi )$-integrable measure
  equivalence coupling. It
  requires that $\Sigma_n=\Sigma^\prime_n$, for all 
  $n\in \bN$.
  We did not manage to fulfill this strong requirement in our case, namely, when
  $G$ is a diagonal product and $H$ a lamplighter group. This is why, instead,
  we rely on \cref{Th:soficME}, which allows $\Sigma^\prime_n$ to
  contain more elements than $\Sigma_n$.
\end{Rq}

\section{Construction of the coupling}\label{Sec:Construction}
We now turn to the proof of \bref{Th:CouplageLL}.
In the following $\BZ$ will denote a diagonal product as defined in
\cref{Sec:DefdeBZ} and satisfying the hypotheses \textbf{(H)} on page~\cpageref{Hyp:H}.
In particular its isoperimetric profile is of the form $I_{\BZ}\simeq \rho \circ
\log$ for some $\rho \in \calC$. 
To prove \cref{Th:CouplageLL} we actually show
that the diagonal product obtained from the isoperimetric profile $\rho \circ
\log$ is the wanted group $G$. The integrability of the coupling is proved using
the criterion of \cref{Th:soficME}.

\subsection{Definition of the sofic approximations}\label{Sec:DefOfTheSA}
The purpose of this section is to define a sofic approximation $(\calG_n)_n$
of $G=\BZ$. It will be a Følner tiling sequence.
In order to define our family of injections
$\iota_n:\calG_n\rightarrow H$, we will also need a Følner tiling sequence in the
lamplighter group $H$. We thus start by
exhibiting Følner tiling sequences in both $G$ and $H$, and then extract
appropriate subsequences to work with.

\subsubsection{Følner tiling sequences in a diagonal product}\label{Sec:TilesBZ}
In \cite{EscalierOEZ} we constructed Følner tiling sequences of diagonal
products. 
We refer to \cref{Subsec:RangeDelta} for details on the range of an element.

\paragraph{Sequence of tiles} For all $n\in \bN$, let
\begin{equation}\label{eq:DefTn}
  T_n:=\left\{ (\mbff,t)\in \BZ \ | \ \range(\mbff,t)\in [0,\kappa^n-1] \right\}.
\end{equation}
The sequence $(T_n)_n$ is a right Følner sequence for $\BZ$
\cite[Proposition~2.13 and Lemma~3.10]{EscalierOEZ}. It verifies
$|\partial_{S_{\BZ}}T_n|/|T_n|\leq 2/\kappa^n$.

For all $n\in \bN$, let $\Landing(n)=\landing(\kappa^n-1)$,\label{Def:Landing} that is to say
$\Landing(n)$ is the unique integer such that $k_{\Landing(n)}\leq {\kappa}^{n}-1
<k_{\Landing(n)+1}$. For example if $k_n=\kappa^n$, then $\Landing(n)=n-1$. Note
that if
$(\mbff,t)$ belongs to $ T_n$, then for all $m> 
\Landing(n)$ it verifies $g^\prime_m=\neutre$.
We recall the following useful fact concerning the growth of the map $\Landing$.
\begin{Claim}[{\cite[Claim~3.8]{EscalierOEZ}}] \label{Claim:Landing}
  Let $n\geq 0$, then either $\Landing(n+1)=\Landing (n)$ or
  $\Landing(n+1)=\Landing (n)+1$. 
\end{Claim}

We computed in \cite[Lemma 4.2]{EscalierOEZ} the
value of $|T_n|$ for all $n\in \bN$, namely
\begin{equation}\label{eq:CardTn}
  |T_n|=\kappa^n \big( {|A| |B|} \big)^{\kappa^n}
    \prod^{\Landing(n)}_{m=1} {\left\vert {\Gamma^\prime}_m \right\vert}^{\kappa^{n}-k_m},
\end{equation}
and moreover showed \cite[Proposition 4.3]{EscalierOEZ} that there exist two
constants $\CTn, \CTnn>0$ depending only on $\BZ$, such that for all $n\in \bN$
\begin{equation}\label{eq:EncadrementLogTn}
    \CTn \kappa^{n-1}l_{\Landing(n)}\leq \ln |T_n| \leq \CTnn \kappa^nl_{\Landing(n)}.
\end{equation}
We now recall the definition of the corresponding shifts.

\paragraph{Sequence of shifts}\label{Def:SigmaZero}
Let $\Sigma_0:=T_0=\left\{ (\mbff,t)\in \BZ \ | \ \range(\mbff,t)= \{0\}
\right\}$. Now for all $n\geq 0$ 
we split $\Sigma_{n+1}$ in $\kappa$ parts defined as follows.
For all $j\in\{0,\ldots,\kappa-1\}$ we let $\Sigma^j_n$ be the set of
$(\mbfg,j\kappa^n)\in \BZ$ such that the following conditions are verified:
\label{Def:Sigman}
\begin{enumerate}
\item $\supp \left(g_0\right) \subseteq\left[ 0, j \kappa^n-1 \right]
  \cup \left[ (j+1)\kappa^n, \kappa^{n+1}-1 \right]$;
\item $\supp \left(g^\prime_m\right)\subseteq
  \left[k_m, j \kappa^n + k_m -1\right]\
  \cup \left[(j+1)\kappa^n, \kappa^{n+1}-1 \right]$ for all $m \in [1,\Landing (n)]$;
\item If $\Landing(n+1)=\Landing(n)+1$ then $\supp \left(g^\prime_{\Landing(n+1)}\right) 
  \subseteq\left[k_{\Landing(n+1)}, \kappa^{n+1}-1 \right]$;
\item $\supp \left(g^\prime_{m}\right) = \emptyset $ for all $ m \notin
  [0,\Landing (n+1)]$. 
\end{enumerate}
We refer to
\bref{fig:Sigmaj} for a representation of an element in such a part.
Finally, we define $\Sigma_{n+1}:= \cup^{\kappa -1}_{j=0}
\Sigma^j_{n+1}$. By \cite[Lemma 3.10]{EscalierOEZ}, this sequence verifies
$T_{n+1}=\Sigma_{n+1}T_n$ for all $n\in \bN$.

Let $(\mbfg,t)$ be an element of some $\Sigma^j_{n+1}$. We represent in
\cref{fig:Sigmaj} the supports and the sets where the maps $g_0, {g^\prime}_1,
\ldots, {g^\prime}_{\Landing (n+1)}$ take their 
values. The light-blue rectangle with dotted outline is in
$\Sigma^j_{n+1}$ if and only if $\Landing(n+1)=\Landing(n)+1$.
\begin{figure}[htbp]
  \centering
  \includegraphics[width=\textwidth]{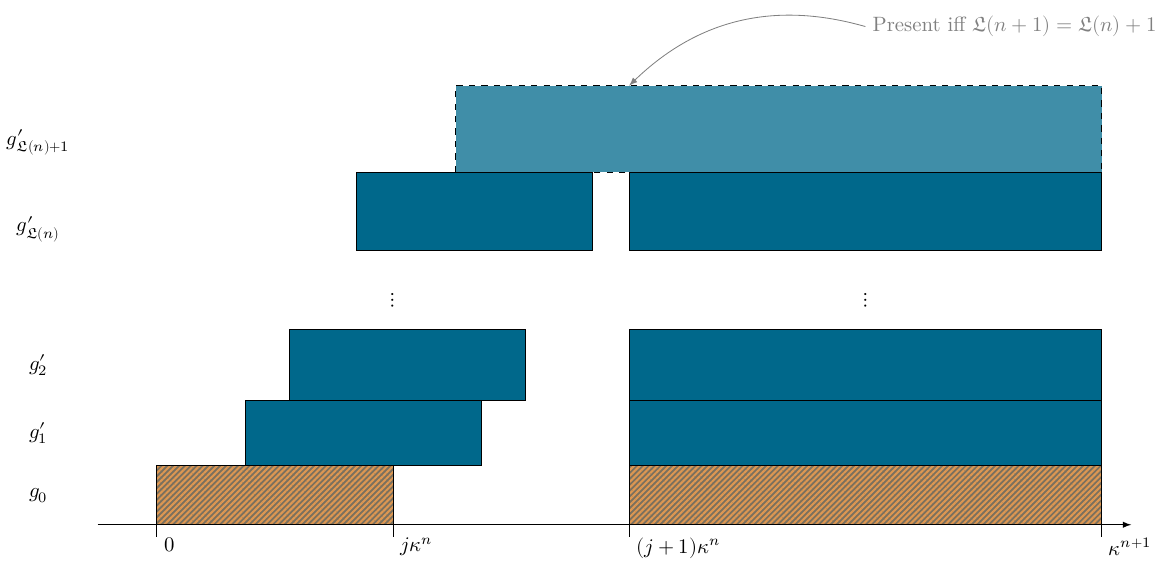}
  \caption{Support and values taken by $(\mbfg,t) \in \Sigma^j_{n}$, reprinted
    from {\cite{EscalierOEZ}}}
  \label{fig:Sigmaj}
\end{figure}

We conclude with some remarks on the cursor of elements of $\Sigma_n$. Recall
from \cpageref{Def:Range} that $\pi_2:\BZ\rightarrow \bZ$ 
denotes the map that sends an element of $\BZ$ to its cursor. 

\begin{Rq}[Cursor of tiling sequences]\label{Rq:CurseurShifts} 
  By definition of $\Sigma_0$ we have $\pi_2(\sigma)=0$ for all $\sigma\in
  \Sigma_0$. Now let $n\in \bN^*$. If
  $\sigma\in \Sigma_n$, then $\pi_2(\sigma)=j\kappa^n$, with $j\in
  \{0,\ldots,\kappa-1\}$.

  In particular, let $(\mbff,t)\in T_n$ and let $(\sigma_i)_{0\leq i\leq n}$ denote
  the unique sequence such that  $(\mbff,t)=\sigma_n\cdots \sigma_0$
  and $\sigma_i\in \Sigma_i$ for all $i\in\{0,\ldots,n\}$. Now decompose $t$ in
  base $\kappa$ as $t=\sum^{n-1}_{i=0} t_i\kappa^i$. Then for all $i\in
  \{0,\ldots,n-1\}$, we have $\pi_2(\sigma_{i+1})=t_i\kappa^i$.
\end{Rq}

\subsubsection{Tiling of the lamplighter group}\label{Sec:SoficLamplighter}
In the following we endow $H=(\bZ/m\bZ) \wr \bZ$ with the following finite, symmetric generating set:
$S_H:=\left\{ (\neutre,\pm 1)\right\}\cup \left\{(z\mathbf{1}_0,0) \, | \, z\in \bZ/m\bZ \right\}$.
Recall that a right Følner sequence of $H$ is given by
\begin{equation*}
  \FLL_n:= \left\{ \big( (\varepsilon_i)_i, t \big) \, \colon
    \, t \in [0,n-1], \, \supp \big({(\varepsilon_i)_{i}}\big) \subseteq [0,n-1] \right\}.
\end{equation*}

Our goal here is to extract a subsequence of $(\FLL_n)_{n\in \bN}$ to define a
\emph{tiling} for our group~$H$. So let $(d_n)_{n\in \bN}$ be a sequence of
integers and define $\SLL_0:=\FLL_{d_0}$. Now let $D_n:=\prod^n_{i=0} d_i$ and
consider for all $n\geq 0$
\begin{equation}\label{eq:DefShiftsLamplighter}
  \begin{split}
    \SLL_{n+1}:= \sqcup^{d_{n+1} -1}_{j=0}
    \left\{ \big( (\varepsilon_i)_i, j D_n  \big) \ | \ 
      \supp\big( (\varepsilon_i)_i\big) \right. \subseteq
      & \big[0,jD_n -1 \big] \\
    & \Big.
      \cup \big[(j+1)D_n,\ D_{n+1} -1 \big]
    \Big\}.
  \end{split}
\end{equation}
These sets will be the shifts of the wanted Følner tiling sequence. This is what
\bref{Lmm:PavageduLamplighter} formalises, but first let us give
some illustration of this tiling.
\begin{Ex}
  Assume that $d_0=2$ and $d_1=4$, then $D_0=2$ and $D_1=8$. Now consider
  $\big((\varepsilon_i)_i,t\big)\in \FLL_{D_0}$ and
  $\big((\varepsilon^{\prime}_i)_i,jD_0 \big)\in \SLL_1$. We represent the
  product of these two elements in \cref{fig:PavageLamp1} for $j=0$ and
  \cref{fig:PavageLamp2} for $j=2$. The dark blue
  squares correspond to lamp configurations coming from the element in $F_{D_0}$
  while the orange ones are coming from the shift. The cursor of this product,
  namely $t+jD_1$, belongs to the hatched blue rectangle. 
\end{Ex}
\begin{figure}[htbp]
  \centering
  \begin{subfigure}[b]{0.65\textwidth}
    \includegraphics[width=\textwidth]{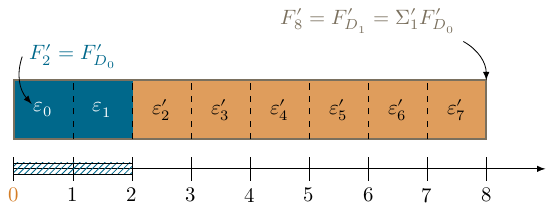} 
    \caption{Representation of $\big((\varepsilon^{\prime}_i)_i,0 \big)\big((\varepsilon_i)_i,t\big)$}
    \label{fig:PavageLamp1}
  \end{subfigure}

  \begin{subfigure}[b]{0.65\textwidth}
    \includegraphics[width=\textwidth]{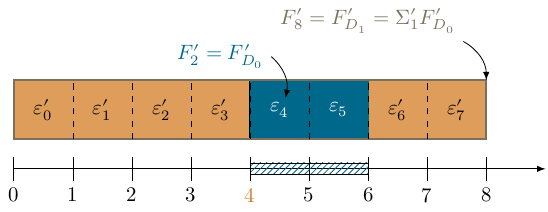} 
    \caption{Representation of $\big((\varepsilon^{\prime}_i)_i,2D_1\big)\big((\varepsilon_i)_i,t\big)$}
    \label{fig:PavageLamp2}
  \end{subfigure}
  \caption{Tiling of the lamplighter}
  \label{fig:PavageLamp}
\end{figure}

Let us now prove that it actually defines a Følner tiling sequence. 
\begin{Lmm}\label{Lmm:PavageduLamplighter} Let $(d_n)_{n\in \bN}$ be a sequence of
  positive integers and for all $n\in \bN$, let $D_n=\prod^n_{i=0}d_i$. Finally
  let $(\SLL_{n})_{n\in \bN}$ be as above.
  
  Then $(\SLL_n)_{n\in \bN}$
  is a right Følner tiling sequence and
  $\FLL_{D_{n+1}}=\SLL_{n+1}\FLL_{D_{n}}$, for all $n\in \bN$. 
\end{Lmm}
\begin{proof}
  Let $(d_n)_{n\in \bN}$  be a sequence of positive integers and for all $n\in
  \bN$ let $D_n:= \prod^{n}_{i=0}d_n$. Let $(\SLL_{n+1})$ be defined as in
  \cref{eq:DefShiftsLamplighter}.\\
  \textbf{Step 1} Let us prove that $\FLL_{D_{n+1}}=\SLL_{n+1}\FLL_{D_{n}}$, for
  all $n\in \bN$.
  \begin{itemize}
  \item We first show that for all $n\in \bN$, 
    $\SLL_{n+1}\FLL_{D_{n}}$ is contained in $\FLL_{D_{n+1}}$.\\
    Let $\big( (\varepsilon^\prime_i)_i, t \big)$ in $ \FLL_{D_n}$ and $j\in
    [0,d_{n+1}-1]$ and take $\big( (\varepsilon_i)_i, jD_n \big) \in 
    \SLL_{n+1}$. Then
    \begin{equation*}
      \big( (\varepsilon_i)_i, jD_n \big)\big( (\varepsilon^{\prime}_i)_i,t\big)
      = \left( \big(\varepsilon_i+ \varepsilon^{\prime}_{i-jD_n}\big)_{i},\  t+jD_n\right)
    \end{equation*}
    By the definition of $\SLL_{n+1}$ given in \cref{eq:DefShiftsLamplighter} and
    since $(\varepsilon^\prime_i)_i $ is supported on $[0,D_n-1]$, we have
    \begin{align*}
      \supp \left( \big(\varepsilon_i + \varepsilon^\prime_{i-jD_n} \big)_{i \in \bZ} \right)
      &\subseteq  \big[0,jD_n -1 \big] \cup \big[(j+1)D_n,\ D_{n+1} -1 \big]
        \cup [jD_n,(j+1)D_n-1],\\
      &= \left[ 0,D_{n+1}-1 \right].
    \end{align*}
    Finally, since $t\leq D_n-1$ and $j\leq d_{n+1}-1$, we get
    \begin{equation*}
      jD_n+t \leq (d_{n+1}-1)D_n + D_n-1 \leq D_nd_{n+1}-1=D_{n+1}-1.
    \end{equation*}
    Thus $\big( (\varepsilon_i)_i, jD_n \big)\big( (\varepsilon^{\prime}_i)_i,t\big)$
    belongs to $\FLL_{D_{n+1}}$.

  \item Now take $\big( (\omega_i)_i, t \big) \in \FLL_{D_{n+1}}$ and let us
    show that $\big( (\omega_i)_i, t \big) $ belongs
    to $\SLL_{n+1}\FLL_{D_{n}}$.\\
    First, remark that since $t\leq D_{n+1}-1$, and since $D_{n+1}=d_{n+1}D_n$, 
    there exists a unique $j \in [0,d_{n+1}-1]$ such that $jD_n\leq t\leq
    (j+1)D_n-1$.
    For such a $j$, let $t^{\prime}:=t-jD_n$ and let $(\varepsilon_i)_i$ and
    $(\varepsilon^\prime_i)_i$ be such that
    \begin{align*}
      \varepsilon_i&= 
                     \begin{cases}
                       \omega_i &\text{if} \ i\in [0,jD_n-1]\cup[(j+1)D_n,D_{n+1}-1],\\
                       \neutre &\text{else,}
                     \end{cases}\\
      \varepsilon^{\prime}_i&= 
                              \begin{cases}
                                \omega_{i+jD_n} &\text{if} \ i\in [0,D_n-1],\\
                                \neutre &\text{else}.
                              \end{cases}
    \end{align*}
    Then  $\big( (\varepsilon_i)_i, jD_n \big) \in \SLL_{n+1}$ and $\big(
    (\varepsilon^{\prime}_i)_i,t^{\prime}\big)\in \FLL_{D_n}$ and 
    $\big( (\varepsilon_i)_i, jD_n \big)\big(
    (\varepsilon^{\prime}_i)_i,t^{\prime}\big)$ is equal to $\big( (\omega_i)_i, t \big)$.
    Hence the equality of the lemma.
  \end{itemize}
  \textbf{Step 2} Let us prove that $(\SLL_n)_{n\in \bN}$ is a right Følner
  tiling sequence.
  \begin{itemize}
  \item The sequence of tiles $\big(\FLL_{D_n}\big)_n$ is a subsequence of the right Følner
    sequence $\big(\FLL_n\big)_n$ of~$H$. Therefore, it is itself a right Følner sequence.
  \item We now show that
  $\sigma\FLL_{D_{n}}\cap\sigma^{\prime}\FLL_{D_n}=\emptyset$ for all $\sigma
  \neq \sigma^{\prime} \in \SLL_{n+1}$.\\
  So take $\big( (\varepsilon_i)_i, jD_n \big)$ and $\big(
  (\varepsilon^{\prime}_i)_i, j^{\prime}D_n \big)$ in $\SLL_{n+1}$ and let
  $((\omega_i)_i,t)$ and $ ((\omega^{\prime}_i)_i,t^{\prime})$ in $\FLL_{D_n}$. If 
  \begin{equation}\label{eq:lienentreleselementsFOTSLamp}
    \big( (\varepsilon_i)_i, jD_n \big)((\omega_i)_i,t) 
    = \big( (\varepsilon^{\prime}_i)_i, j^{\prime}D_n \big)((\omega^{\prime}_i)_i,t^{\prime}),
  \end{equation}
  then in particular $t+jD_n=t^{\prime}+j^{\prime}D_n$. But $t,t^{\prime}<D_n$
  thus the last equality implies $t=t^{\prime}$ and therefore $j=j^{\prime}$. In
  particular $(\varepsilon_i)_i$ and $(\varepsilon^{\prime}_i)_i$ are supported
  on the same set, namely $[0,jD_n-1]\cup[(j+1)D_n,D_{n+1}-1]$. This
  last set is disjoint from  $[jD_n,(j+1)D_n-1]$ which is the interval where
  $(\omega_{i-jD_n})_i$ and $(\omega_{i-jD_n})_i$ are supported. Combining
  this with \cref{eq:lienentreleselementsFOTSLamp} we thus get that
  $\varepsilon_i=\varepsilon^{\prime}_i$ for all $i$. Hence the result.
  \qedhere
\end{itemize}
\end{proof}

We conclude with the following lemma.
\begin{Lmm}\label{Lmm:CardLamplighter}
  Let $(d_n)_{n\in \bN}$ be a sequence of positive integers and for all $n\in
  \bN$, let $D_n=\prod^n_{i=0}d_i$.  Let $(\SLL_{n})_{n\in \bN}$ be as in
  \bref{eq:DefShiftsLamplighter}. Then, the sequence of tiles $\big(\FLL_{D_n}\big)_n$
  verifies $ \mathrm{diam}\left( \FLL_{D_n} \right)\leq 3D_n$ for all $n\in \bN$.
  Moreover, for all $n\in \bN$ we have
  \begin{equation*}
    \left\vert \FLL_{D_n} \right\vert = D_n m^{D_n}
    \quad \text{and} \quad
    \left\vert \SLL_{n+1} \right\vert = d_{n+1}m^{(d_{n+1}-1)D_n}.
  \end{equation*}
\end{Lmm}
\begin{proof}
  By choice of the generating set $S_H$, the bound on the diameter is immediate.
  
  Let $n\in \bN$ and consider $\big( (\varepsilon_i)_i, t\big)\in \FLL_{D_n}$. 
  By definition of $\FLL_{D_n}$ (see \cpageref{Sec:SoficLamplighter}),
  there are exactly $D_n$ possible values for the cursor $t$, and $m^{D_n}$
  possible values for the sequence $(\varepsilon_i)_i$. Therefore $\FLL_{D_n}$
  contains exactly $ D_n m^{D_n}$ elements. Furthermore, since
  $\FLL_{D_{n+1}}=\SLL_{n+1}\FLL_{D_n}$, the shift $ \SLL_{n+1}$ contains
  therefore $d_{n+1}m^{(d_{n+1}-1)D_n}$ elements.
\end{proof}

We thus know how to build Følner tiling sequences for $H$. To be able to define
$\iota_n$, we want $\Sigma_n$ to embed in $\SLL_n$ for all $n$. We thus now have to
specify the sequence $(d_n)_{n\in \bN}$.
\subsubsection{Defining the sofic approximations}\label{subsec:DefSoficApprox}
In the following we denote by $(\Sigma_n)_n$ the Følner tiling sequence of $\BZ$
defined in \bref{Sec:TilesBZ} and by $(T_n)_n$ the corresponding tiling of $\BZ$
defined in \cref{eq:DefTn}. In particular it verifies $T_{n+1}=\Sigma_{n+1}T_n$ for
all $n\in \bN$. 

Given a sequence $(d_n)_n$ of integers we also let $D_n:=\prod^n_{i=0}d_i$ and
denote by $(\SLL_n)_n$ the corresponding Folner tiling sequences defined in
\cref{eq:DefShiftsLamplighter}. The goal is to define $(d_n)_n$ such that
one can embed $\Sigma_n$ in $\SLL_n$ for all $n$.

So for all $n\in\bN$, let $\calG_n:=T_n$, and now let us define inductively the
subsequence $\big(\calH_n\big)_n$ of $\big(\FLL_n \big)_n$. 
First, let $d_0:=\min\{ j: |T_0|\leq |\FLL_{j}|\}$ and $\calH_0:=\FLL_{d_0}$. By
definition of the Følner tiling sequences we have $\FLL_{d_0}=\SLL_{0}$ and
$\Sigma_0=T_0$, thus $|\Sigma_0|\leq |\SLL_0|$. Moreover, by definition of
$d_0$ and \bref{Lmm:CardLamplighter}, we have
\begin{equation}\label{eq:CardSigmaZero}
  (d_0-1)m^{d_0-1}<
  \left\vert \Sigma_{0} \right\vert
  \leq d_0m^{d_0}.
\end{equation}

Then let $n\geq 0$ and assume that we have defined the sequence $(d_i)_{0\leq i
  \leq n}$ and $\calH_n=\FLL_{D_n}$. Let $d_{n+1}$ be the minimal integer
such that the set $\SLL_{n+1}$ defined in \cref{eq:DefShiftsLamplighter}
contains at least $\left\vert{\hS_{n+1}}\right\vert$ elements, \emph{viz.}
\begin{equation}\label{eq:Relationdn}
  \left( d_{n+1}-1 \right) m^{D_n(d_{n+1}-2)} <
  \left\vert{\hS_{n+1}}\right\vert \leq
  d_{n+1} m^{D_n(d_{n+1}-1)}.
\end{equation}
Remark that in
particular, one can embed $\hS_{n+1}$ in $\SLL_{n+1}$. Finally let
$\calH_{n+1}=\FLL_{D_{n+1}}$.\label{Def:CalHn} It defines by induction a sequence
$(\calH_n)_{n\in \bN}$.
We refer to \cref{tab:Recap} for a
summary of the different objects defined above.

\begin{table}[htbp]
  \centering
  \begin{tabular}{ccc}
    \toprule
    $\calG_0=\Sigma_0=T_0$&~& $\calG_{n+1}=T_{n+1}=\Sigma_{n+1}T_n$\\
    \midrule
    $d_0:=\min\{ j: |T_0|\leq |\FLL_{d_0}|\}$ & ~&$\calH_0:=\FLL_{d_0}=\SLL_0$\\
    \midrule
    $d_{n+1}$ defined by \bref{eq:Relationdn} & ~&$D_{n+1}=d_{n+1}D_n=\prod^{n+1}_{i=0}d_i$\\
    $\SLL_{n+1}$ defined by \bref{eq:DefShiftsLamplighter}& ~&$\calH_{n+1}=\FLL_{D_{n+1}}=\SLL_{n+1}\FLL_{n+1}$\\
    \bottomrule
  \end{tabular}
  \caption{Definition of the sofic approximations}
  \label{tab:Recap}
\end{table}

\subsection{Quantification}\label{Sec:Qt}
The purpose if this section is to fulfil the criterion given by
\bref{Th:soficME}.

\subsubsection{Definition of the injection} 
Let us define, for all $n\in \bN$, the injection $\iota_n$ that embeds $\calG_n$ in $\calH_n$.

First remark that, by definition of $(\SLL_n)_n$, we have
$|\Sigma_n|\leq |\SLL_n|$, for all $n\in \bN$. 
There thus exists an injection
$\nu_n$\label{nun} from $\hS_n$ to $\SLL_n$. From now on, fix an arbitrarily
chosen sequence of injections $(\nu_n)_n$.  

Now let $n\in \bN$. Since $\big(\hS_{i}\big)_{i\in \bN}$ is a Følner tiling sequence,
one can write every element of $\calG_{n}$ as a product $\sigma_{n} \cdots \sigma_0$
where $\sigma_i \in\hS_i$ is uniquely determined for all $i$. We can thus
define without ambiguity the following map $\iota_n$.

\begin{Lmm}\label{Lmm:DefIotan} Let $n\in \bN$. The map defined by 
  \begin{equation*}
    \iota_{n} \ : \
    \begin{cases}
      \calG_n = T_{n}
      &\rightarrow \calH_n=F^\prime_{D_n},\\
      \sigma_n\cdots \sigma_0
      & \mapsto
        \nu_n\left( \sigma_n\right) \cdots\nu_0\left( \sigma_0\right).
    \end{cases}
  \end{equation*}
  where $\sigma_i\in\Sigma_i$ for all $i\leq n$, is an injection
  from $\calG_n$ to $\calH_n\subseteq H$.
\end{Lmm}

\begin{proof} Let $n\in \bN$. 
  The map $\iota_n$ thus defined does not depend on the choice of the
  decomposition of an element of $T_n$ in a product of shifts. Indeed, by the
  preceding discussion, there is only one such possible decomposition.

  Now let
  $x,x^{\prime}\in \calG_n$. .
  Let $x=\sigma_n \cdots \sigma_0$ and
  $x^\prime=\sigma^\prime_n\cdots\sigma^\prime_0$ be the decompositions in
  product of shifts of $x$ and $x^\prime$. In particular, for all  $i\in
  \{0,\ldots,n\}$ the shifts $\sigma_i,\sigma^{\prime}_i$ are elements of $\hS_{i}$.
  Then by definition of $\iota_n$ we have  $\iota_n(x)=\prod^n_{i=0} 
  \nu_i\left( \sigma_i \right)$ and $\iota_n(x^{\prime})=  \prod^n_{i=0} \nu_i\left( \sigma^{\prime}_i
  \right)$. But $\nu_i(\sigma_i)$ and $\nu_i(\sigma^{\prime}_i)$ belong to $\SLL_i$ for all $i$, thus
  $\prod^n_{i=0} \nu_i\left( \sigma_i \right)$ is the decomposition of
  $\iota_n(x)$ in product of shifts and $\prod^n_{i=0} \nu_i\left( \sigma^\prime_i
  \right)$ the one of $\iota_n(x^{\prime})$ . Since $(\SLL_{n})_n$ is a Følner
  tiling shift, this decomposition is unique. Thus if
  $\iota_n(x)=\iota_n(x^{\prime})$ then
  $\nu_i(\sigma_i)=\nu_{i}(\sigma^{\prime}_i)$ for all $i$. Hence
  $\sigma_i=\sigma^{\prime}_i$ since $\nu_i$ is an injection for all $i$, and
  therefore $x=x^{\prime}$. Hence the injectivity of $\iota_n$.
\end{proof}

\subsubsection{Distance}
In order to verify \bref{eq:CondsoficIntegrability}, we now need to estimate the
values taken by the distance between  $\iota_n\big((\mbff,t)\big)$ and
$\iota_n\big((\mbff,t)s\big)$ when $(\mbff,t)\in \BZ$ and $s\in
\mathcal{S}_{\BZ}$. 
We distinguish two
cases depending on whether $s=(\mbfe,1)$ or not. But first let us introduce some
notations.

Recall that $\calG^{(1)}_n$ is defined as
\begin{equation*}
  \calG^{(1)}_n=\left\{ x \in \mathcal{G}_n \ | \ 
    B_{\mathcal{G}_n}(x,1) \ \text{is isometric to}\ B_G(e_G,r)\right\}.
\end{equation*}
Let $t$ in $\{0,\ldots,\kappa^n-1\}$ and let $t=\sum^{n-1}_{i=0} t_i\kappa^i$ be
the decomposition in base $\kappa$ of $t$. If $(\mbff,t)$ belongs to
$\calG^{(1)}_n$ then $t<\kappa^n-1$ and thus there exists $i\in \{0,\ldots,n-1\}$ such
that $t_i<\kappa-1$. Therefore, we can define
\begin{equation}
  \label{eq:Defizerot}
  i_0(t):=\min\{i\leq n \ | \ t_i<\kappa-1 \}.
\end{equation}
This index corresponds to the coefficient $t_i$ that will
absorb the carry when we add one to~$t$. In other words, the decomposition
of~$t+1$ in base $\kappa$ is given by~$t+1=(t_{i_0(t)}+1)\kappa^{i_0(t)} +
\sum^{n-1}_{i=i_0(t)} t_i \kappa^i$.

\begin{Lmm}\label{Lmm:ActionOnTheShifts}
  Let $n\in \bN$ and $s\in \mathcal{S}_{\BZ}$. For all $t\in
  \{0,\ldots,\kappa^n-1\}$, let $i_0(t)$ be as in \cref{eq:Defizerot}.
  Then for all $(\mbff,t)\in \calG^{(1)}_n$ we have 
  \begin{equation*}
    d_H\left(\iota_n\big((\mbff,t)\big),\iota_n\big((\mbff,t)s\big)
    \right)\leq
    \begin{cases}
      3D_0 & \text{if} \ s\neq (\mbfe,1),\\
      3D_{i_0(t)}                 & \text{if} \ s=(\mbfe,1).
    \end{cases}
  \end{equation*}
\end{Lmm}
The following proof relies on the definition of the range given in
\cref{Def:Range}, \cpageref{Def:Range}. 
\begin{proof}
  Recall that
  $\calG_n=T_n=\Sigma_n\cdots \Sigma_0$ and since $(\mbff,t)\in T_n$, there exists a
  unique sequence $(\sigma_i)_{0\leq i\leq n}$ such that $(\mbff,t)=\sigma_n\cdots
  \sigma_0$ and $\sigma_i\in \Sigma_i$ for all $i\in\{0,\ldots,n\}$.
  Similarly, we denote by $(\sigma^\prime_n)_{0\leq i\leq n}$ the unique
  sequence such that $(\mbff,t) s=\sigma^\prime_n\cdots
  \sigma^\prime_0$ and $\sigma^\prime_i\in \Sigma_i$ for all $i$.
  \smallskip
  
  \noindent\textbf{First case} Let $s\in \mathcal{S}_{\BZ} \backslash \{(\neutre,1)\}$.

  \begin{itemize}
  \item Let us first prove that $\sigma_0 s$ belongs to $T_0$.\\
    Since $\sigma_0\in \Sigma_0=T_0$ it verifies $\range(\sigma_0)=\{0\}$.
    Moreover, since $s$ belongs to $\mathcal{S}_{\BZ}\backslash\{(\neutre,1)\}$ its cursor is
    equal to $0$. Therefore, by definition of the range, we have
    $\range(\sigma_0 s)=\{0\}$. Hence $\sigma_0 s\in T_0$. 
  \item Let us now show that $\sigma_i=\sigma^\prime_i$ for all $i>0$.\\
    Remark that
    \begin{equation*}
      (\mbff,t) s= \sigma_n\cdots \sigma_0 s
      = \sigma_n\cdots \sigma_1 \left(\sigma_0 s\right).
    \end{equation*}
    But by the last point $\sigma_0 s$ belongs to $\Sigma_0$, thus the
    above equality gives a decomposition of $(\mbff,t)s$ in a product of shifts.
    By uniqueness of this decomposition, we thus have
    $\sigma^\prime_0=\sigma_0s$ and $\sigma_i=\sigma^\prime_i$ for all
    $i>0$.
  \item Let us now bound by above the distance between
    $\iota_n\big((\mbff,t)\big) $ and $ \iota_n\big((\mbff,t)s\big)$.\\
    Using the definition of $\iota_n$ given by \bref{Lmm:DefIotan}, then
    the above point, and finally \bref{Lmm:CardLamplighter}, we obtain that
    \begin{align*}
      d_H\left(\iota_n\big((\mbff,t)\big),\iota_n\big((\mbff,t)s\big)\right)
      &=
      d_H\big(\nu_0\left(\sigma_0 \right), \nu_0\left( \sigma_0 s \right) \big),\\
      &\leq \mathrm{diam}(\calH_0)=3D_0.
    \end{align*}
  \end{itemize}
  \noindent\textbf{Second case} Now assume that $s=(\mbfe,1)$.
  \begin{itemize}
  \item Let us prove that $\sigma_{i_0(t)}\cdots \sigma_0(\neutre,1)$ belongs to
    $T_{i_0(t)}$.\\
    First recall from \cpageref{Def:Range} that $\pi_2:\BZ\rightarrow \bZ$
    denotes the map that sends an element of $\BZ$ to its cursor. Using 
    \bref{Rq:CurseurShifts} we get that
    $\pi_2(\sigma_0)=0$ and 
    $\pi_2(\sigma_{i+1})=t_i\kappa^{i}$ for all $i\in \{0,\ldots,n-1\}$. Therefore
    \begin{equation*}
      \pi_2\left( \sigma_{i_0(t)}\cdots \sigma_0 \right)
      =\sum^{i_0(t)}_{i=0}t_i\kappa^{i}.
    \end{equation*}
    But, by definition of $i_0(t)$, we have $t_i=\kappa-1$ for all
    $i<i_0(t)$ and $t_{i_0(t)}<\kappa-1$, thus
    $\pi_2\left( \sigma_{i_0(t)}\cdots \sigma_0 \right)<\kappa^{i_0(t)}-1$. Hence
    \begin{equation*}
      \pi_2\left( \sigma_{i_0(t)}\cdots \sigma_0 (\neutre,1)\right)
      =\pi_2\left( \sigma_{i_0(t)}\cdots \sigma_0 \right) + 1
      \leq \kappa^{i_0(t)}-1.
    \end{equation*}
    Since $\sigma_{i_0(t)}\cdots \sigma_0$ belongs to $T_{i_0(t)}$, its range is
    included in $[0,\kappa^{i_0(t)}-1]$. Thus, using the above inequality we get
    \begin{align*}
      \range\left(\sigma_{i_0(t)}\cdots \sigma_0 (\neutre,1) \right)
      &\subseteq \range\left(\sigma_{i_0(t)}\cdots \sigma_0\right)
      \cup \left\{\pi_2\left( \sigma_{i_0(t)}\cdots \sigma_0 (\neutre,1)\right)\right\}\\
      &\subseteq [0,\kappa^{i_0(t)}-1],
    \end{align*}
    namely $\sigma_{i_0(t)}\cdots \sigma_0 (\neutre,1)$ belongs to
    $T_{i_0(t)}$.
  \item Let us now show that $\sigma_i=\sigma^\prime_i$ for all $i>i_0(t)$.\\
    By the above point there exists a sequence $(\bar{\sigma}_i)_{0\leq i\leq
      i_0(t)}$ such that 
    $\bar{\sigma}_i \in \Sigma_i$ for all $i\leq i_0(t)$ and such that
    $\sigma_{i_0(t)}\cdots \sigma_0 s=\bar{\sigma}_{i_0(t)}\cdots
    \bar{\sigma}_0$.
    Therefore
    \begin{align*}
      (\mbff,t) s
      = \sigma_n\cdots \sigma_0  s
      &= \sigma_n\cdots \sigma_{i_0(t)+1}
        \left(\sigma_{i_0(t)}\cdots \sigma_0  s\right),\\
      &= \sigma_n\cdots \sigma_{i_0(t)+1}
        \left(\bar{\sigma}_{i_0(t)}\cdots \bar{\sigma}_0\right),
    \end{align*}
    where $\bar{\sigma}_i$ belongs to $\Sigma_i$ for all $i\leq i_0(t)$ and
    $\sigma_i$ belongs to $\Sigma_i$ for all $i>i_0(t)$. Hence the
    above equality gives a decomposition of $(\mbff,t)s$ in a product of shifts.
    By uniqueness of this decomposition, 
    we thus have
    $\sigma^\prime_i=\bar{\sigma}_i$ for all $i\leq i_0(t)$ and
    $\sigma_i=\sigma^\prime_i$ for all $i>i_0(t)$.
  \item Using the definition of $\iota_n$ given by \bref{Lmm:DefIotan}, then
    the above point, and finally \bref{Lmm:CardLamplighter}, we obtain that 
    \begin{align*}
      d\left(\iota_n\big((\mbff,t)\big),\iota_n\big((\mbff,t) s\big)\right)
      &\leq \mathrm{diam}(\calH_{i_0(t)})=3D_{i_0(t)}.\qed 
        \phantom{\qedhere}
    \end{align*}
  \end{itemize}
\end{proof}
We conclude with an estimate of the proportion of elements in $\calG_n$ such that
$i_0(t)=i$ for some fixed $i\in \{0,\ldots,n-1\}$.
\begin{Lmm}\label{Lmm:Enumeration}
  Let $n\in \bN$ and $i_0(t)$ be as in \cref{eq:Defizerot}. Then, for
  all $i\in \{0,\ldots, n-1\}$, we have
  \begin{equation*}
    \left\vert\left\{ (\mbff,t)\in \calG_n \ : \ i_0(t)=i \right\} \right\vert
    = \frac{(\kappa-1)}{\kappa^i} \left\vert\calG_n\right\vert.
  \end{equation*}
\end{Lmm}
\begin{proof} So let $i\in \{0,\ldots,n-1\}$. 
  As above, let $t=\sum^{n-1}_{j=0}t_j\kappa^j$ be the decomposition in base
  $\kappa$ of $t$. For a given $j\in \{0,\ldots,n-1\}$, the digit $t_j$ can
  uniformly take $\kappa$ different values, namely any value between zero and $\kappa-1$.
  Thus, the proportion of $(\mbff,t)\in \calG_n$ such that $t_{i}<\kappa-1$
  and $t_{j}=\kappa-1$ for all $j<i$ is equal to $(\kappa-1)\kappa^{-i}$. Hence
  the lemma.
\end{proof}
\subsubsection{Integrability}

We now turn to the proof of the quantification and first bound by above the value of~$D_n$.
\begin{Lmm}\label{Claim:Dn}
  There exists $\CSnn\geq 1$ depending only on $\BZ$, such that $D_{n}
  \leq \CSnn \kappa^nl_{\Landing(n)}$, for all $n\in \bN$.
\end{Lmm}

\begin{proof}
  We start by proving some inequalities and then show the upper bound by induction on~$n$.
  \smallskip

  \noindent\noindent\textbf{Useful remarks} 
  \begin{enumerate}
  \item\label{item:uiun} First, note that since $m\geq 2$ we have $\ln(m)\geq 1/2$ and therefore
    $1/\ln(m)\leq 2$.
  \item\label{item:uideux} By definition of $\Sigma_0$, and \cref{H:1} of hypotheses
    \textbf{(H)}, we have $|\Sigma_0|=q=12$. Therefore
    $\ln|\Sigma_0|=\ln(q)\geq 1$.
  \item\label{item:uitrois} Let us show that $d_{n+1}\geq 2$ for all $n\in \bN$.\\
    Let $n\in \bN$ and assume towards a contradiction that $d_{n+1}=1$. By the
    right most inequality of \bref{eq:Relationdn}, this implies
    $|\Sigma_{n+1}|\leq m^{d_0\cdots d_n \cdot 0}=1$. But the value of $|T_i|$ is
    strictly increasing. Since $T_{n+1}=\Sigma_nT_n$, this forces $\Sigma_{n+1}$
    to contain at least $2$ elements. Hence the contradiction.
  \item \label{item:uiquatre} Let us now prove that, for all $n\in \bN^*$ we
    have $\ln \left\vert\Sigma_{n} \right\vert \leq 
      \CTnn \kappa^{n}l_{\Landing(n)}$.\\
      Let $n\in \bN$.
      Recall that 
      $T_{n+1}=\Sigma_{n+1}T_n$. In particular $|\Sigma_{n+1}|=|T_{n+1}|/|T_n|$.
      Combining this with the upper bound in \bref{eq:EncadrementLogTn}, leads to
      \begin{align*}
        \ln \left\vert\Sigma_{n+1} \right\vert=
        \ln \left\vert T_{n+1}\right\vert-\ln \left\vert T_{n}\right\vert
        \leq \ln \left\vert T_{n+1}\right\vert
        &\leq \CTnn \kappa^{n+1}l_{\Landing(n+1)}.
      \end{align*}
  \end{enumerate}
  Let $\CSnn:=\max \left\{3\ln(q),6\CTnn \right\}$. Remark in particular
  that $\CSnn\geq 1$, by the above \cref{item:uideux}. We now aim to
  show that $D_n\leq\CSnn \kappa^nl_{\Landing(n)}$, for all $n\in \bN$
  \smallskip

  \noindent\textbf{Base case} Let us first treat the case where $n=0$.
  Remark that by \bref{eq:CardSigmaZero} we
  have $(d_0-1)m^{d_0-1}\leq |\Sigma_0|$. Hence $(d_0-1)\ln(m)\leq \ln\left|
    \Sigma_0 \right|$.
  Therefore, using first the latter
  inequality, then that $\ln |\Sigma_0| \geq 1$ (\cref{item:uideux}),
  then that $1/\ln(m)\leq 2$ (\cref{item:uiun}), leads to
  \begin{equation*}
    d_0\leq \frac{\ln|\Sigma_0|}{\ln(m)} +1 \leq
    \ln|\Sigma_0|\left(\frac{1}{\ln(m)} +1  \right)\leq 3  \ln|\Sigma_0|.
  \end{equation*}
  But by \cref{item:uideux}, $|\Sigma_0|=q$. Recalling moreover that $D_0=d_0$ thus gives
  $D_0\leq 3 \ln|\Sigma_0|=3\ln(q)$. Furthermore, using the definition of
  $\Landing$ given \cpageref{Def:Landing}
  and using \cref{H:2} of hypotheses \textbf{(H)} leads to $l_{\Landing(0)}\geq l_0=1$. The
  wanted inequality then comes noting that $\CSnn\geq 3\ln(q)$.
  \smallskip
  
  \noindent\textbf{Induction} Now let $n\in \bN$ and assume that $D_n\leq
  \CSnn \kappa^nl_{\Landing(n)}$.
  We showed in \cref{item:uitrois} that $d_{n+1}\geq 2$. We distinguish two
  cases depending on whether $d_{n+1}=2$ or $d_{n+1}\geq 3$.
  \begin{itemize}
  \item If $d_{n+1}=2$, then $D_{n+1}=2D_n$. Using our assumption on $D_n$,
    and that $\kappa\geq 3$, we obtain 
    \begin{align*}
      D_{n+1}=2D_n\leq 2\cdot \CSnn \kappa^nl_{\Landing(n)}
      &\leq \CSnn \kappa^{n+1}l_{\Landing(n)}.
    \end{align*}
    Recall from \bref{Claim:Landing} that $\Landing(n)\leq
    \Landing(n+1)$. Furthermore, by \cref{H:4} of \textbf{(H)} we know that
    $(l_i)_i$ is a subsequence of a geometric sequence. In particular it is
    non-decreasing. Thus $l_{\Landing(n)}\leq l_{\Landing(n+1)}$ and hence the
    wanted inequality..
  \item Assume now that $d_{n+1}\geq 3$. Taking the logarithm of the left most
    inequality of \bref{eq:Relationdn} then leads to $D_n(d_{n+1}-2)\ln(m)\leq
    \ln|\Sigma_{n+1}|$. Since $D_{n+1}=d_{n+1}D_n$ we deduce
    \begin{equation*}
      D_{n+1}\leq \frac{\ln|\Sigma_{n+1}|}{\ln(m)} \frac{d_{n+1}}{(d_{n+1}-2)}.
    \end{equation*}
    But since $d_{n+1}\geq 3$, we have $d_{n+1}/(d_{n+1}-2)\leq 3$. Recalling
    that $1/\ln(m)\leq 2$ from the above useful remarks thus
    leads to $D_{n+1}\leq 6 {\ln|\Sigma_{n+1}|}$. Then applying
    \cref{item:uiquatre} of the aforementioned remarks, and using that $\CSnn\geq 6\CTnn$, implies
    \begin{align*}
      D_{n+1}&\leq 6 \CTnn \kappa^{n+1}l_{\Landing(n+1)}\leq \CSnn \kappa^{n+1}l_{\Landing(n+1)}.
               \qed      
               \phantom{\qedhere}
    \end{align*}
  \end{itemize}
\end{proof}

We now turn to the proof of our main result.

\begin{proof}[{Proof of \cref{Th:CouplageLL}}] So let $\rho \in \calC$, let
  $\varepsilon>0$ and define $\varphi_\varepsilon={\rho}^{1-\varepsilon}$. Consider
  $\BZ$ fulfilling the conditions \textbf{(H)}
  and such that $I_{\BZ}\simeq \rho \circ \log$. Let $G:=\BZ$
  and $H:=(\bZ/m\bZ)\wr \bZ$. Let $(\calG_n)_{n}$ and $(\cH_n)_{n}$ be as
  defined in \bref{subsec:DefSoficApprox}.

  Consider $s \in \mathcal{S}_{\BZ}$ and fix $R>0$ and $n\in \bN$. First remark
  that:
  \begin{align*}
    \sum_{r=0}^R \varphi_\varepsilon
    &\left(r\right) \lim_{\mathcal{U}}
    \frac{\left\vert\left\{x\in \mathcal{G}_n^{(1)}\mid
    d_{H}(\iota_n(x),\iota_n(x\cdot s))=r\right\}\right\vert}%
    {\left\vert\mathcal{G}_n\right\vert}\\
    &\leq \sum_{i\geq 0} \varphi_\varepsilon(3D_i) \sum^{3D_i}_{r=3D_{i-1}+1}  \lim_{\mathcal{U}}
      \frac{\left\vert\left\{x\in \mathcal{G}_n^{(1)}\mid
      d_{H}(\iota_n(x),\iota_n(x\cdot s))=r\right\}\right\vert}%
      {\left\vert\mathcal{G}_n\right\vert},\\
    &\leq \sum_{i\geq 0} \varphi_\varepsilon(3D_i) \lim_{\mathcal{U}}
      \sum^{3D_i}_{r=3D_{i-1}+1} 
      \frac{\left\vert\left\{x\in \mathcal{G}_n^{(1)}\mid
      d_{H}(\iota_n(x),\iota_n(x\cdot s))=r\right\}\right\vert}%
      {\left\vert\mathcal{G}_n\right\vert},\\
    &\leq \sum_{i\geq 0} \varphi_\varepsilon(3D_i) \lim_{\mathcal{U}}
      \frac{\left\vert\left\{x\in \mathcal{G}_n^{(1)}\mid
      d_{H}(\iota_n(x),\iota_n(x\cdot s))\in\left[ 3D_{i-1}+1,3D_i \right]\right\}\right\vert}%
      {\left\vert\mathcal{G}_n\right\vert}.
  \end{align*}
  We distinguish two cases depending on whether $s=(\neutre,1)$ or not.
  \medskip
  
  \noindent\textbf{First case} 
  Let $s\in \mathcal{S}_{\BZ}\backslash\{\mbfe,1\}$. By \bref{Lmm:ActionOnTheShifts}
  the distance between $\iota_n(x)$ and $\iota_n\left( x\cdot s
  \right)$ is bounded on $\calG_n$ by $3D_0$. Therefore, the above inequalities
  lead to
  \begin{equation*}
    \sum_{r=0}^R \varphi_\varepsilon\left(r\right)\lim_{\mathcal{U}}
    \frac{\left\vert\left\{x\in \mathcal{G}_n^{(1)}\mid
    d_H \left(\iota_n(x),\iota_n\left(x\cdot s\right)\right)=r\right\}\right\vert}%
{\left\vert\mathcal{G}_n\right\vert}
\leq \varphi_\varepsilon(3D_0)<+\infty.
\end{equation*}
Since this upper bound does not depend on $R$, the condition of
\bref{eq:CondsoficIntegrability} is satisfied in this case.
\medskip

\noindent\textbf{Second case} We now assume that $s=(\mbfe,1)$. Using the above
inequalities and \cref{Lmm:ActionOnTheShifts} then \cref{Lmm:Enumeration}, we get
\begin{align*}
  \sum_{r=0}^R \varphi_\varepsilon
  &\left(r\right)\lim_{\mathcal{U}}
  \frac{\left\vert\left\{x\in \mathcal{G}_n^{(1)}\mid
  d_H \left(\iota_n(x),\iota_n\left(x\cdot s\right)\right)=r\right\}\right\vert}%
    {\left\vert\mathcal{G}_n\right\vert}\\
  &\leq\sum_{i\geq 0} \varphi_\varepsilon (3D_i) \lim_{\mathcal{U}}
    \frac{\left\vert \{x\in \calG^{(1)}_n \mid i_0(x)=i \} \right\vert}%
    {\left\vert \calG_n \right\vert}\\
  &\leq\sum_{i=0}^n \varphi_\varepsilon\left(3D_i\right)
    \frac{\kappa-1}{\kappa^{i}}.
  \end{align*}
  But,
  by \bref{Claim:Dn} there
  exists a constant $\CSnn \geq 1$ such that $D_i\leq \CSnn
  \kappa^{i}l_{\Landing(i)}$, for all $i\in \bN$.
  Therefore, using that $\rho$ is non-decreasing, then \bref{Rq:IneqTildeRho},
  we obtain that for all $i\in \bN$
  \begin{equation*}
    \rho(3D_i) \leq \rho\left(3\CSnn \kappa^{i}l_{\Landing(i)} \right)
    \leq 3\CSnn\rho\left( \kappa^{i}l_{\Landing(i)} \right).
  \end{equation*}
  Now, recall that the map $\rhoaff$ given by \bref{Prop:B2}
  verifies $\rhoaff\left( \kappa^{i}l_{\Landing(i)}
  \right)=\kappa^i$ and that there exists some constant $c\geq 1$ depending
  only on $\BZ$ such that $\rho\left( \kappa^{i}l_{\Landing(i)} \right)\leq c\rhoaff\left( \kappa^{i}l_{\Landing(i)}
  \right)$.
  Combining these inequalities and recalling that
  $\varphi_{\varepsilon}=\rho^{1-\varepsilon}$, we thus get
  \begin{align*}
    \sum_{i=0}^n \varphi_{\varepsilon}\left(3D_i\right)
    \frac{\kappa-1}{\kappa^{i}}
    &\leq %
      \sum_{i=0}^n  \left(3\CSnn c \kappa^i \right)^{(1-\varepsilon)}\frac{\kappa-1}{\kappa^{i}},\\
    &=
       (3\CSnn c)^{(1-\varepsilon)} (\kappa-1)\sum_{i=0}^n  \kappa^{-\varepsilon i}.
  \end{align*}
  Finally, since $\kappa\geq 3$, we have $\kappa^{-\varepsilon}<1$ and thus the
  sequence $(\kappa^{-\varepsilon i})_i$ is 
  summable. Hence the sum $\sum_{i=0}^n  \kappa^{-\varepsilon i}$ is bounded by
  above by a constant 
  that does not depend on $n$ nor on $R$. Therefore
  \cref{eq:CondsoficIntegrability} is verified.
  \medskip

  \noindent\textbf{Conclusion on the integrability.} By \bref{Th:soficME}, there
  exists an at most one-to-one $\varphi$-integrable measure subgroup coupling
  from $G$ to $H$.
\end{proof}

\bibliographystyle{alpha}
\bibliography{Biblio}
\vfill
\begin{flushleft}
  \href{https://aescalier.perso.math.cnrs.fr/index_eng.html}{Amandine Escalier}\\ 
  Lyon 1 Université, EC Lyon, INSA Lyon, Université Jean Monnet, CNRS, ICJ, UMR 5208, Villeurbanne, France\\
  \emph{e-mail:~}\texttt{amandine.escalier@math.cnrs.fr}
\end{flushleft}
\end{document}